\documentclass{amsart}
\usepackage{amssymb}
\usepackage{array}
\usepackage{multirow}

\DeclareMathOperator{\Aut}{Aut}
\DeclareMathOperator{\Int}{Int}
\DeclareMathOperator{\im}{im}
\DeclareMathOperator{\GL}{GL}


\newcommand{\inv}{^{-1}}
\newcommand{\phm}{\phantom{-}} 
\newcommand{\NN}{\mathcal{N}}
\newcommand{\F}{\mathbf{F}}
\newcommand{\N}{\mathbf{N}}
\newcommand{\Z}{\mathbf{Z}}
\newcommand{\greatintfrac}[2]{{\left\lfloor\frac{#1}{#2}\right\rfloor}}
\newcommand{\bgreatintfrac}[2]{{\bigl\lfloor\frac{#1}{#2}\bigr\rfloor}}

\newcommand{\exerdoit}{\begin{exercise}Do it.\end{exercise}}

\newcommand{\cvectwo}[2]{\Bigl(\begin{matrix}#1\\#2\end{matrix}\Bigr)}
\newcommand{\cvecthree}[3]{
	\Bigl(\begin{smallmatrix}#1\\#2\\#3\end{smallmatrix}\Bigr)}

\newcommand{\scvectwo}[2]{
	\bigl(\begin{smallmatrix}#1\\#2\end{smallmatrix}\bigr)}

\newcommand{\smattwo}[4]{
	\bigl(
	\begin{smallmatrix}#1&#2\\#3&#4\end{smallmatrix}
	\bigr)}

\newcommand{\mattwo}[4]{\Bigl(\begin{matrix}#1&#2\\#3&#4\end{matrix}\Bigr)}
\newcommand{\matthree}[9]{
	\Bigl(
	\begin{smallmatrix}
		#1&#2&#3\\
		#4&#5&#6\\
		#7&#8&#9
	\end{smallmatrix}
	\Bigr)
	}

\newcommand{\matjordanone}{\matthree110010001}
\newcommand{\matjordantwo}{\matthree110011001}


\newtheorem{thm}{Theorem}
\newtheorem*{mainthm}{Main Theorem}
\newtheorem{prop}[thm]{Proposition}
\newtheorem{lemma}[thm]{Lemma}
\newtheorem{cor}[thm]{Corollary}

{
\theoremstyle{definition}
\newtheorem{defn}[thm]{Definition}
\newtheorem{notation}[thm]{Notation}
\newtheorem*{exercise}{Exercise}
}

\title{Groups of order $p^4$ made less difficult}
\author{Jeffrey D. Adler}
\address{Department of Theoretical and Applied Mathematics \\
The University of Akron \\ Akron, OH  44325-4002}
\curraddr[Adler]{Department of Mathematics and Statistics \\
American University \\
Washington, DC 20016-8050}
\email[Corresponding author]{jadler@american.edu}
\author{Michael Garlow}
\email{garlow@uakron.edu}
\author{Ethel R. Wheland}
\email{wheland@uakron.edu}
\date{20 July, 2006.
We inserted, deleted, or changed about 30 words in October, 2016.}

\begin{document}

\begin{abstract}
Using only undergraduate-level methods, we classify all groups
of order $p^4$, where $p$ is an odd prime.
\end{abstract}

\maketitle

\addtocounter{section}{-1}
\section{Introduction}
Marcel Wild recently provided a simple classification
\cite{Wild} of the $14$ groups of order $16$.
The virtue of his presentation is that it relies only
on elementary methods,
except for his use of the Cyclic Extension Theorem
(see Theorem \ref{thm:cyclic-extn} below),
for which Wild
suggested that there should be an elementary proof.
The present article has two purposes:
to provide such a proof, thus putting all of Wild's
machinery onto an elementary footing;
and to extend this machinery enough that one
could use it to classify
the groups of order $p^4$ for any prime $p$
without a lot of effort.
We perform this classification when $p$ is odd.
(There are $15$ such groups.)
For completeness, we also comment on the $p=2$ case,
though of course it is already covered by Wild.
Our methods and results lead to several
interesting projects and problems for students.

The cost of the generality of $p$
is that we must depend on some basic linear algebra
in order to avoid an explosion of \textit{ad hoc} calculations.
More specifically,
of all the results and tools that we use, the following
are the most advanced:
\begin{itemize}
\item
exactly half of the nonzero elements of a prime field of odd order are
squares;
\item
every nontrivial finite $p$-group has a normal subgroup of index $p$;
\item
every nontrivial finite $p$-group has a nontrivial center;
\item
basic manipulation of matrices;
\item
theory of Jordan canonical forms (Lemma~\ref{lem:jordan});
\item
Fundamental Theorem of Finite Abelian Groups.
\end{itemize}
Some of these could be dispensed with,
but at the cost of more computation.

In \S\ref{sec:cyclic},
we recall the notion of an \emph{extension type}
(which we learned from \cite{Wild});
show that every extension type determines a group
(this is the Cyclic Extension Theorem);
and explore (in \S\ref{sec:equivalence})
when two extension types determine isomorphic groups.
In \S\ref{sec:construction},
we first discuss the abelian case, and then
begin the construction of all nonabelian groups of order $p^4$
by cooking up a collection of extension types that
together must describe all such groups.
To keep this collection small, we make heavy use of 
the results of \S\ref{sec:equivalence}.
In \S\ref{sec:classification},
we complete the classification by determining precisely 
which extension types from \S\ref{sec:construction}
yield isomorphic groups.
Finally, in \S\ref{sec:p=2} we comment on the $p=2$ case.

The classification of the groups of order $p^4$, and much else,
was known to H\"older \cite{Holder}
and Young \cite{Young},
and is covered in
\S\S112--118
of the textbook of Burnside \cite{Burnside}.
One can go much farther with the help of computers.
See \cite{besch-eick-obrien:millenium} for a history
of the classification of small groups,
and 
\cite{obrien-vlee:p7}
for an example of recent progress.

\textsc{Notation:}
Let $p$ denote an odd prime.
For a natural number $n\in \N$,
$C_n$ will denote the cyclic group with $n$ elements.
If $x$ is an element of a group $G$, then
$\Int(x)$ will denote the automorphism of $G$ that
sends each element $g\in G$ to $xgx\inv$.
If $\tau$ is in the group $\Aut(G)$ of automorphisms of $G$,
then $G^\tau$
will denote the set of fixed points of $\tau$ in $G$:
$G^\tau =\{g\in G \colon \tau(g)=g\}$.
We let $|G|$ denote the order of $G$.
Let $\F_p$ denote the field with $p$ elements,
and $\GL_m(\F_p)$ the group of invertible
$m$-by-$m$ matrices with entries in $\F_p$.
Fix a nonsquare $\varepsilon$ modulo $p$.

\textsc{Acknowledgements:}
This paper is a refinement and generalization of the second-named author's
master's thesis~\cite{Garlow},
written under the supervision of the other two authors.
It is a pleasure to thank
Jeffrey Riedl for his careful reading of the thesis;
Marcel Wild for encouragement;
and the Department of Theoretical and Applied Mathematics
at The University of Akron for financial support.

\section{Cyclic Extensions}
\label{sec:cyclic}

\subsection{Motivation}
\label{sec:cyclic-motivation}

Let $G$ be a finite group.
Consider a normal subgroup $N\vartriangleleft G$,
where $G/N$ is cyclic of order $n$.
Choose any $a\in G\smallsetminus N$ such that the coset $Na$ generates $G/N$.
Let $v=a^{n}\in N$, and let $\tau\in \Aut(N)$ act via conjugation by $a$.
Thus,
\[
\tau(v)=aa^{n}a\inv=a^{n}=v \, .
\]
In other words, $\tau$ fixes $v$.
Also, for $x\in N$,
\[
\tau^{n}(x)=a^{n}xa^{-n}=vxv\inv \, . 
\]
That is, $\tau^n = \Int(v)$.
In particular, if $N$ is an
abelian group, then $\tau^n$ is the identity automorphism.

\begin{defn}
\label{defn:extn-type}
An \emph{extension type} for a group $N$ is
a quadruple $(N,n,\tau,v)$, where $n\in \N$, $v\in N$,
and $\tau\in \Aut(N)$ is such that $\tau^n = \Int(v)$.
\end{defn}

Following \cite{Wild},
notice that this definition is stated without mention of a group $G$.
However, starting with a group $G$, choices of normal subgroup $N$
(having cyclic factor group of order $n$)
and element $a\in G\smallsetminus N$ (such that $aN$ generates $G/N$)
will determine an extension type
$(N,n,\tau,v)$ as above.
Moreover, the
extension type determines $G$ up to isomorphism.
To see this, note that since $N$ and $a$ generate $G$,
every element of $G$ has the form $xa^i$ for some $x\in N$
and $0\leq i < n$.
Thus, in order to determine the operation table of $G$,
we only need to know how to multiply two such elements $xa^i$
and $ya^j$ to obtain a third element in the same form.
Now note that
$$
(xa^i)(ya^j)
=x (a^i y a^{-i}) a^i a^j
= (x \tau^i (y)) a^{i+j},
$$
which we will rewrite as 
$(x \tau^i (y)v ) a^{i+j-n}$
if $i+j\geq n$.

\subsection{Using extension types to construct groups}

We have just seen that each extension type determines at most
one group up to isomorphism.
The next result guarantees that
each extension type does indeed determine a group.

\begin{thm}[Cyclic Extension Theorem]
\label{thm:cyclic-extn}
Each extension type
$(N,n,\tau,v)$ determines a group.
\end{thm}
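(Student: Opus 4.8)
The plan is to realize $G$ explicitly as a set with the multiplication law dictated by the computation in \S\ref{sec:cyclic-motivation}, and then verify the group axioms by hand. Concretely, I would take $G$ to be the set of pairs $(x,i)$ with $x\in N$ and $0\le i<n$ (each thought of as the formal product $xa^{i}$), and define
\[
(x,i)\cdot(y,j)=
\begin{cases}
\bigl(x\,\tau^{i}(y),\;i+j\bigr), & i+j<n,\\[2pt]
\bigl(x\,\tau^{i}(y)\,v,\;i+j-n\bigr), & i+j\ge n;
\end{cases}
\]
since $0\le i,j<n$ forces $0\le i+j<2n$, these cases are exhaustive and mutually exclusive. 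The only properties of the data that the verification needs are: $\tau$ is an automorphism, so $\tau^{a}\circ\tau^{b}=\tau^{a+b}$ and $\tau^{a}(xy)=\tau^{a}(x)\,\tau^{a}(y)$ for all $a,b\in\Z$; the defining relation $\tau^{n}=\Int(v)$; and the identity $\tau(v)=v$ (whence $\tau^{k}(v)=v$ for every $k$), which was observed in \S\ref{sec:cyclic-motivation} and is exactly what will make the law associative. Closure is automatic; $(e,0)$, with $e$ the identity of $N$, is visibly a two-sided identity; and a one-line computation gives $(x,i)$ the right inverse $(x^{-1},0)$ when $i=0$ and $\bigl(\tau^{-i}(x^{-1}v^{-1}),\,n-i\bigr)$ when $i>0$. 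That these are genuine inverses then follows from the standard fact that in a monoid in which every element has a right inverse, those right inverses are automatically two-sided.

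Thus the whole theorem comes down to associativity, and that is the one step I expect to require real work — though the work is bookkeeping, not insight. I would verify $\bigl((x,i)(y,j)\bigr)(z,k)=(x,i)\bigl((y,j)(z,k)\bigr)$ by tracking the carries produced when $i+j$, $j+k$, and $i+j+k$ are reduced modulo $n$; because $0\le i+j+k<3n$ there are only a handful of cases, organized by whether $i+j\ge n$ and whether a further carry occurs. In each case both sides have second coordinate $(i+j+k)\bmod n$, while the first coordinates are each a product of $x$, $\tau^{i}(y)$, $\tau^{i+j}(z)$, and one or two factors of $v$; rewriting each in terms of $\tau$ applied to $x,y,z,v$ and collapsing with $\tau^{n}=\Int(v)$ and $\tau(v)=v$ makes the two sides literally equal. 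I would present one representative case in full — say $i+j\ge n$ with $j+k<n$, in which an interior $v$ must be moved across a power of $\tau$ and the identity $\tau(v)=v$ is precisely what permits it — and remark that the remaining cases go the same way. Finally, one checks that $N\times\{0\}$ is a normal subgroup isomorphic to $N$ with $G/(N\times\{0\})$ cyclic of order $n$, so the group produced has the intended shape.

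An alternative I would mention but not adopt, since it leans on slightly heavier machinery than the elementary treatment wants, is to get $G$ as a quotient of a genuine group: form the semidirect product $\widehat G=N\rtimes_{\tau}\Z$, with $1\in\Z$ acting as $\tau$; check that $z=(v^{-1},n)$ is central in $\widehat G$, where conjugating $z$ by $N\times\{0\}$ uses $\tau^{n}=\Int(v)$ and conjugating by $(e,1)$ uses $\tau(v)=v$; and set $G=\widehat G/\langle z\rangle$. Since $z^{k}=(v^{-k},kn)$, the subgroup $\langle z\rangle$ meets $N\times\{0\}$ trivially, so $N$ embeds in $G$ with $G/N\cong\Z/n\Z$ of order $n$, and unwinding the product of cosets reproduces exactly the displayed law. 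This packages all the carry-bookkeeping into the associativity of $\widehat G$, at the cost of invoking semidirect products and a quotient; for the purposes here the direct verification above is the right route, and the finite case-check of associativity is the only genuinely laborious part of the argument.
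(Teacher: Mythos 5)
Your construction is the same as the paper's: the underlying set is $N\times\{0,\dots,n-1\}$ with exactly the displayed multiplication, the identity and inverses are checked directly, and all of the real work is in associativity. The one organizational difference is that the paper avoids your case analysis altogether by writing the product uniformly as $(x,a^i)\ast(y,a^j)=\bigl(x\tau^i(y)v^{\lfloor (i+j)/n\rfloor},\,a^{i+j-n\lfloor(i+j)/n\rfloor}\bigr)$ and pushing every power of $v$ to the right with the single identity $v^{r}\tau^{m-nr}(z)=\tau^{m}(z)v^{r}$, which follows from $\tau^{n}=\Int(v)$ alone; the carries then combine via $\lfloor(i+j)/n\rfloor+\bigl\lfloor(i+j+k-n\lfloor(i+j)/n\rfloor)/n\bigr\rfloor=\lfloor(i+j+k)/n\rfloor$. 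That device is worth adopting, since it collapses your handful of carry cases into one computation.

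One point needs more care than you give it. You list $\tau(v)=v$ among the available hypotheses, citing \S\ref{sec:cyclic-motivation}; but that identity is derived there only for extension types that already arise from a group, it is not part of Definition~\ref{defn:extn-type}, and it does not follow from $\tau^{n}=\Int(v)$ (which only yields $\Int(\tau(v))=\Int(v)$, i.e.\ $\tau(v)=v$ up to a central factor). Your instinct that associativity needs it is correct: in the case $j+k\ge n$ a factor $\tau^{i}(v)$ appears on one side and $v$ on the other, and for abelian $N$ with $\tau^{n}=1$ and $v\notin N^{\tau}$ the operation genuinely fails to be associative (e.g.\ $N=C_3\times C_3$, $n=2$, $\tau=-I$, $v\neq 0$). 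So $\tau(v)=v$ must either be added to the hypotheses or justified by noting that every type used later in the paper has $v\in N^{\tau}$; as written, your proof quietly assumes something the definition does not supply (the paper's ``by similar reasoning'' hides the same dependency). Your alternative route via $G=(N\rtimes_{\tau}\Z)/\langle(v^{-1},n)\rangle$ is a genuinely different and cleaner argument that makes this transparent: centrality of $(v^{-1},n)$ is precisely the conjunction of $\tau^{n}=\Int(v)$ and $\tau(v)=v$, and associativity comes for free from the semidirect product.
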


\begin{proof}
Following \cite{Wild},
let $G$ be the set of ordered pairs $(x,a^{i})$ for $x\in N$
and $i\in\{0,1,...,n-1\}$,
and define a binary operation $\ast$ on $G$ by
$$
(x,a^{i})\ast(y,a^{j})=
\begin{cases}
(x \tau^i(y), a^{i+j}) & \text{if $i+j<n$}, \\
(x \tau^i(y)v, a^{i+j-n}) & \text{if $i+j \geq n$}.
\end{cases}
$$
We will find it
convenient to use the following equivalent definition:
\[
(x,a^{i})\ast(y,a^{j})=
\left(  x\tau^{i}(y)v^{\greatintfrac{i+j}{n}}
,a^{i+j-n\greatintfrac{i+j}{n} }\right)
.
\]
We must show that $\ast$ satisfies the three axioms of a group operation.
If associativity is known, then
it is easy enough to check that
$(e,a^{0})$ is the identity
(where $e$ is the identity in $N$)
and that
$(\tau^{-i}\left[  (vx)\inv \right]  ,a^{n-i})$
is the inverse of $(x,a^{i})$.
Therefore, it only remains to
show that $\ast$ is associative.
We will use the elementary fact
that for all integers $m$ and $r$,
$v^{r}\tau^{m-nr}(z)=\tau^{m}(z)v^{r}$.
Consider
\begin{align*}
&  \left[  (x,a^{i})\ast(y,a^{j})\right]  \ast(z,a^{k}) \\
&  =\left(  x\tau^{i}(y)v^{\greatintfrac{i+j}{n}}
\:,\:
a^{i+j-n\greatintfrac{i+j}{n}}\right)  \ast (z,a^{k}) \\
&  =\Bigl(  x\tau^{i}(y)v^{\greatintfrac{i+j}{n}}
\tau^{i+j-n\greatintfrac{i+j}{n}}(z)
v^{\bgreatintfrac{i+j+k-n\greatintfrac{i+j}{n}}{n}}
\:,\:
a^{i+j-n\greatintfrac{i+j}{n}+k-n
	\bgreatintfrac{i+j+k-n\greatintfrac{i+j}{n}}{n}}
\Bigr)   \\
&  =\Bigl(  x\tau^{i}(y)\tau^{i+j}(z)v^{\greatintfrac{i+j}{n}}
v^{\bgreatintfrac{i+j+k-n\greatintfrac{i+j}{n}}{n}}
\:,\:
a^{i+j+k-n\greatintfrac{i+j+k}{n}}\Bigr)   \\
&  =\left(  x\tau^{i}(y)\tau^{i+j}(z)v^{\greatintfrac{i+j+k}{n}}
\:,\:
a^{i+j+k-n\greatintfrac{i+j+k}{n}}\right)   \\
&  =\left(  x\tau^{i}\left[  y\tau^{j}(z)\right] v^{\greatintfrac{i+j+k}{n}}
\:,\:
a^{i+j+k-n\greatintfrac{i+j+k}{n} }\right)  \,,
\end{align*}
which is equal to
$(x,a^{i})\ast\left[  (y,a^{j})\ast(z,a^{k})\right]$
by similar reasoning.
\end{proof}

We will often write $a$ for $(e,a)$ and $x$ for $(x,a^{0})$.

\subsection{Equivalence of extension types}
\label{sec:equivalence}

Recall (from the introduction) that every group of order $p^4$ has 
a normal subgroup of order $p^3$.
Thus,
in order to construct all groups of order $p^4$, it would be sufficient to
construct all extension types $(N,p,\tau,v)$, where $N$ is a group of order
$p^3$.
However, the number of such extension types is huge.
We present some
techniques for identifying when two extension types determine
isomorphic groups.

\begin{defn}
\label{defn:equiv}
Two extension types are \emph{equivalent} if
they determine isomorphic groups.
\end{defn}

Note that in \cite{Wild}, equivalence has the following meaning.

\begin{defn}
The extension types $(N,n,\tau,v)$ and
$(N',n,\sigma,\omega)$ are \emph{conjugate} if there is an
isomorphism $\phi:N\longrightarrow N'$ such that $\sigma=\phi\circ
\tau\circ\phi\inv$ and $\omega=\phi(v)$.
\end{defn}

\begin{lemma}
\label{lem:conj-are-equiv}
Conjugate extension types are equivalent.
\end{lemma}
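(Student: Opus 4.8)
The plan is to take a conjugating isomorphism $\phi\colon N\to N'$ and use it to build an explicit isomorphism between the two groups constructed via the Cyclic Extension Theorem. Write $G$ for the group attached to $(N,n,\tau,v)$ and $G'$ for the group attached to $(N',n,\sigma,\omega)$, both realized as sets of pairs $(x,a^i)$ with the operation $\ast$ from the proof of Theorem~\ref{thm:cyclic-extn}. The natural candidate map is $\Phi\colon G\to G'$ given by $\Phi(x,a^i)=(\phi(x),a^i)$. Since $\phi$ is a bijection $N\to N'$ and the second coordinate is untouched, $\Phi$ is automatically a bijection; the only real content is that $\Phi$ respects the two operations.

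First I would verify the identity $\phi\circ\tau^i=\sigma^i\circ\phi$ for all $i\ge 0$, which follows immediately from $\sigma=\phi\circ\tau\circ\phi\inv$ by induction on $i$. I would also record that $\phi(v^k)=\omega^k$ for all $k\ge 0$, since $\phi$ is a homomorphism and $\phi(v)=\omega$. With these two facts in hand, the homomorphism check is a direct computation: applying $\Phi$ to $(x,a^i)\ast(y,a^j)=\bigl(x\tau^i(y)v^{\lfloor (i+j)/n\rfloor},a^{i+j-n\lfloor(i+j)/n\rfloor}\bigr)$ and pushing $\phi$ through the product gives $\bigl(\phi(x)\,\phi(\tau^i(y))\,\phi(v)^{\lfloor(i+j)/n\rfloor},a^{\,\cdots}\bigr)=\bigl(\phi(x)\,\sigma^i(\phi(y))\,\omega^{\lfloor(i+j)/n\rfloor},a^{\,\cdots}\bigr)$, which is exactly $(\phi(x),a^i)\ast(\phi(y),a^j)$ computed in $G'$. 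Hence $\Phi$ is an isomorphism, so $G\cong G'$, and the two extension types are equivalent by Definition~\ref{defn:equiv}.

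There is no serious obstacle here; the argument is routine once one writes down the right map. The only point requiring a moment's care is bookkeeping: one must use the single closed-form version of $\ast$ (rather than the two-case version) so that the exponent arithmetic in the second coordinate — which does not involve $\phi$ at all — matches on the nose, and one must be sure the compatibility relation $\phi\circ\tau^i=\sigma^i\circ\phi$ is applied with the correct exponent $i$. Everything else is the mechanical observation that a homomorphism commutes with products and powers.
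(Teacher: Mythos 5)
Your proof is correct and complete. The paper itself does not supply an argument here --- it simply cites Lemma 1(a) of Wild --- so you have in effect filled in the omitted reference, and you have done so with the expected argument: the map $\Phi(x,a^i)=(\phi(x),a^i)$ between the two groups constructed in Theorem~\ref{thm:cyclic-extn}, shown to be an isomorphism via $\phi\circ\tau^i=\sigma^i\circ\phi$ and $\phi(v)=\omega$. Your closing remarks about using the closed-form version of $\ast$ and about the second coordinate being unaffected by $\phi$ are exactly the right bookkeeping points; nothing is missing.
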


\begin{proof}
This is Lemma 1(a) of \cite{Wild}.
\end{proof}

\begin{lemma}
\label{lem:v-powers-equiv}
Let $N$ be finite abelian, let $n\in \N$,
and let $i\in \Z$ be prime to $|N|$.
Then
the extension types $(N,n,\tau,v)$ and $(N,n,\tau,v^i)$ are conjugate.
\end{lemma}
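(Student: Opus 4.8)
The plan is to exhibit the conjugating isomorphism explicitly: let $\phi\colon N\to N$ be the $i$-th power map, $\phi(x)=x^i$. Since $N$ is abelian, $\phi$ is a homomorphism, and since $i$ is prime to $|N|$ it is injective (if $x^i=e$, then the order of $x$ divides both $i$ and $|N|$, hence is $1$, so $x=e$); as $N$ is finite, $\phi$ is therefore a bijection, i.e. $\phi\in\Aut(N)$. This is the one place where both hypotheses — $N$ abelian and $i$ prime to $|N|$ — are used.

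Next I would verify that $\phi$ commutes with $\tau$. For any $x\in N$, using that $\tau$ is a homomorphism, $\phi(\tau(x))=\tau(x)^i=\tau(x^i)=\tau(\phi(x))$, so $\phi\circ\tau=\tau\circ\phi$, which rearranges to $\tau=\phi\circ\tau\circ\phi\inv$. Moreover $\phi(v)=v^i$ directly from the definition of $\phi$. Taking $N'=N$, $\sigma=\tau$, and $\omega=v^i$ in the definition of conjugate extension types, these two identities are precisely what is required, so $(N,n,\tau,v)$ and $(N,n,\tau,v^i)$ are conjugate.

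There is no real obstacle here. The only subtleties are that the $i$-th power map is genuinely an automorphism (needing abelianness for the homomorphism property and coprimality for bijectivity) and that it commutes with an arbitrary automorphism of $N$ — and the latter holds for any power map simply because homomorphisms preserve powers. Both verifications are routine.
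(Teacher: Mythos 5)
Your proposal is correct and follows exactly the paper's own argument: take $\phi(x)=x^i$, note it is an automorphism because $i$ is prime to $|N|$ (and a homomorphism since $N$ is abelian), observe $\phi\tau=\tau\phi$ because homomorphisms preserve powers, and conclude with $\phi(v)=v^i$. Your write-up is if anything slightly more explicit about why $\phi$ is bijective.
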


\begin{proof}
By hypothesis, the map
$\phi:N\longrightarrow N$ defined by
$\phi(x)=x^i$
has trivial kernel, and is thus an automorphism.
For all $x\in N$,
$$
\phi(\tau(x))  =(\tau(x))^i=\tau(x^i)=\tau(\phi(x)).
$$
That is, $\phi\tau = \tau\phi$,
so $\phi\tau\phi\inv = \tau$.
Since $\phi(v)=v^i$, the result follows.
\end{proof}

\begin{notation}
Given $\tau\in\Aut(N)$ and $n\in\N$,
define a function
$\NN_{\tau,n}:N\longrightarrow N$ by
\[
\NN_{\tau,n}(x)=x\tau(x)\tau^{2}(x)\cdots\tau^{n-1}(x).
\]
We will elide the subscripts $\tau$ and $n$ when they are understood
from the context.
\end{notation}

The usefulness of the function $\NN_{\tau,n}$ comes from the following
lemma.

\begin{lemma}
\label{lem:power-norm}
Suppose $N \vartriangleleft G$ and
$\tau\in \Aut(N)$ acts via conjugation by $a \in G$.
Then for all $x\in N$,
$(xa)^n = \NN_{\tau,n}(x) a^n$.
\end{lemma}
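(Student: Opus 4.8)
The plan is to prove the identity $(xa)^n = \NN_{\tau,n}(x)\,a^n$ by induction on $n$, working inside the ambient group $G$ and exploiting the hypothesis that conjugation by $a$ realizes $\tau$ on $N$, i.e.\ $a y a\inv = \tau(y)$ for all $y \in N$ (equivalently $y a = a\,\tau\inv(y)$, or better for our purposes $a^k y = \tau^k(y)\, a^k$ for all $k \in \Z$ and $y \in N$, which follows immediately since $N$ is normal).

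\medskip\noindent\textbf{Base case and inductive step.} For $n = 1$ the claim reads $xa = x\,a$, since $\NN_{\tau,1}(x) = x$; this is trivial. (One could equally well start at $n = 0$ with the empty product convention $\NN_{\tau,0}(x) = e$.) Assume the identity holds for some $n \geq 1$. Then I would write
\[
(xa)^{n+1} = (xa)^n (xa) = \NN_{\tau,n}(x)\, a^n\, x\, a,
\]
and then push $a^n$ past $x$ using $a^n x = \tau^n(x)\, a^n$, obtaining
\[
(xa)^{n+1} = \NN_{\tau,n}(x)\,\tau^n(x)\, a^n a = \bigl(x\,\tau(x)\cdots\tau^{n-1}(x)\bigr)\tau^n(x)\, a^{n+1} = \NN_{\tau,n+1}(x)\, a^{n+1},
\]
which completes the induction.

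\medskip\noindent The only point requiring care — and really the only content — is the commutation rule $a^k y = \tau^k(y)\, a^k$ for $y \in N$. For $k \geq 0$ this is an immediate induction from $a y = \tau(y)\, a$, which is just a restatement of the hypothesis $\tau(y) = a y a\inv$; for negative $k$ one uses that $\tau$ is invertible, though in fact only the case $k = n \geq 0$ is needed above. Since $N$ is normal, each $\tau^k(y)$ genuinely lies in $N$, so the expression $\NN_{\tau,n}(x)$ makes sense as an element of $N$ and the final factored form $(\text{element of }N)\cdot a^n$ is legitimate. I do not anticipate any real obstacle here; the lemma is essentially a bookkeeping identity, and the proof is a two-line induction once the commutation rule is recorded.
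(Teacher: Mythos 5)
Your proof is correct and amounts to the same computation as the paper's: the paper writes $(xa)^n$ as a telescoping product $x(axa\inv)(a^2xa^{-2})\cdots(a^{n-1}xa^{-(n-1)})a^n$ in one step, which is exactly what your induction unrolls to. The key commutation rule $a^k y = \tau^k(y)a^k$ is the same in both arguments, so there is no substantive difference.
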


\begin{proof}
We have
\begin{align*}
(xa)^{n}
&  =\left(  xa\right)  \left(  xa\inv a^{2}\right)  \left(  xa^{-2}
a^{3}\right)  \cdots(xa^{-(n-1)}a^{n})\\
&  =x(axa\inv)(a^{2}xa^{-2})\cdots(a^{n-1}xa^{-(n-1)})a^{n}\\
&  =x\tau(x)\tau^{2}(x)\cdots\tau^{n-1}(x) a^n \\
&  =\NN_{\tau,n}(x) a^n \, .
\qedhere
\end{align*}
\end{proof}

Several remaining results
(Lemmata~\ref{lem:change-coset-rep},
\ref{lem:change-coset},
and
\ref{lem:special-equiv},
and Proposition~\ref{prop:no-cyclic})
assert that one extension
type is equivalent to another.
Their proofs all follow the same outline.
Given an extension type $(N,n,\tau,v)$, construct a group
$G$ as in the proof of Theorem~\ref{thm:cyclic-extn}.
Choose a normal subgroup $N'$ of $G$ (often but not always
equal to $N$), and an element $a'\in G\smallsetminus N'$ such
that $a'N'$ generates $G/N'$.
As in \S\ref{sec:cyclic-motivation},
we obtain an extension type $(N',n,\tau',v')$.
Since this type is realized by $G$,
it must be equivalent to $(N,n,\tau,v)$.

\begin{lemma}
\label{lem:change-coset-rep}
For $x\in N$, the
extension types $(N,n,\tau,v)$ and $(N,n,\Int(x)\tau,\NN_{\tau,n}(x)v)$
are equivalent.
\end{lemma}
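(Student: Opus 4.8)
The plan is to follow exactly the outline described just above the statement. First I would form the group $G$ attached to $(N,n,\tau,v)$ as in the proof of Theorem~\ref{thm:cyclic-extn}, using the shorthand in which $(y,a^0)$ is written $y$ and $(e,a)$ is written $a$; thus $N$ is the subgroup $\{(y,a^0):y\in N\}$ of $G$, it is normal with $G/N$ cyclic of order $n$ generated by $aN$ (being the kernel of the surjection $G\to C_n$, $(y,a^i)\mapsto i$), and $a^n=v$ in $G$. I will assume $n\geq 2$, the case $n=1$ being immediate since then both types simply describe $N$ itself.

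Next I would choose the new coset representative $a':=xa=(x,a)$. Since $x\in N$ we have $a'\notin N$ and $a'N=aN$, so $a'N$ still generates $G/N$; hence, as in \S\ref{sec:cyclic-motivation}, the pair $(N,a')$ gives rise to an extension type $(N,n,\tau',v')$ that is realized by $G$ and is therefore equivalent to $(N,n,\tau,v)$. It only remains to compute $\tau'$ and $v'$. Conjugation by $a'$ carries $y\in N$ to $a'y(a')\inv=(xa)y(xa)\inv=x\tau(y)x\inv=\bigl(\Int(x)\tau\bigr)(y)$, so $\tau'=\Int(x)\tau$; and, since $\tau$ is conjugation by $a$, Lemma~\ref{lem:power-norm} gives $v'=(a')^n=(xa)^n=\NN_{\tau,n}(x)\,a^n=\NN_{\tau,n}(x)v$. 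Thus the type determined by $(N,a')$ is precisely $(N,n,\Int(x)\tau,\NN_{\tau,n}(x)v)$, which completes the argument.

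None of the steps is hard: the computation of $\tau'$ is a one-line conjugation, and the computation of $v'$ is a direct application of Lemma~\ref{lem:power-norm}. The only point needing a moment's care is checking that the shorthand ``$y$ for $(y,a^0)$, $a$ for $(e,a)$'' is compatible with the operation $\ast$ --- i.e.\ that $xa\cdot y=x\tau(y)\,a$ and $a^n=v$ hold literally in $G$ --- but both follow at once from the two cases in the definition of $\ast$. The main idea, such as it is, is simply to recognize $xa$ as the coset representative that produces the desired $\tau'$, after which Lemma~\ref{lem:power-norm} automatically produces the claimed $v'$.
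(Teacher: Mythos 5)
Your proposal is correct and follows essentially the same route as the paper: choose the new coset representative $a'=xa$ in the group $G$ realizing $(N,n,\tau,v)$, compute that conjugation by $a'$ is $\Int(x)\tau$, and apply Lemma~\ref{lem:power-norm} to get $(a')^n=\NN_{\tau,n}(x)v$. The extra care you take (that $a'N$ still generates $G/N$, and that the shorthand is compatible with $\ast$) is sensible but not a departure from the paper's argument.
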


\begin{proof}
Let $G$ be a group that realizes $(N,n,\tau,v)$
and constructed as in the proof of Theorem~\ref{thm:cyclic-extn}.
We will show that $G$ also realizes $(N,n,\tau,\NN(x)v)$.
Let $a'=xa$, and
let $\tau'\in \Aut(N)$ act via conjugation by $a'$.
For $y\in N$, consider
\[
\tau'(y)=a'y\left(  a'\right)  \inv
=\left(  xa\right)  y\left(  a\inv x\inv \right)
=x(aya\inv)x\inv 
=x\tau(y)x\inv
=(\Int(x)\tau)(y) \, ,
\]
so $\tau'=\Int(x)\tau$.
From Lemma~\ref{lem:power-norm},
$(a')^n  =\NN(x)v$.
\end{proof}

\begin{cor}
\label{cor:change-coset-rep}
If $N$ is abelian and $x\in N$, then the
extension types $(N,n,\tau,v)$ and $(N,n,\tau,\NN_{\tau,n}(x)v)$
are equivalent.
\end{cor}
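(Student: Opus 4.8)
The plan is to derive this as an immediate specialization of Lemma~\ref{lem:change-coset-rep}. That lemma asserts that $(N,n,\tau,v)$ is equivalent to $(N,n,\Int(x)\tau,\NN_{\tau,n}(x)v)$ for any $x\in N$, with no hypothesis on $N$. So the only thing to observe is that the extra hypothesis here --- namely that $N$ is abelian --- forces the automorphism $\Int(x)$ to be trivial, and hence $\Int(x)\tau=\tau$.

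Concretely, I would argue as follows. Suppose $N$ is abelian and fix $x\in N$. For every $y\in N$ we have $xyx\inv = y$ because $N$ is commutative, so $\Int(x)$ is the identity element of $\Aut(N)$; consequently $\Int(x)\circ\tau=\tau$. Substituting this into the conclusion of Lemma~\ref{lem:change-coset-rep}, the extension type $(N,n,\Int(x)\tau,\NN_{\tau,n}(x)v)$ is literally the extension type $(N,n,\tau,\NN_{\tau,n}(x)v)$. Since Lemma~\ref{lem:change-coset-rep} tells us this is equivalent to $(N,n,\tau,v)$, we are done.

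There is essentially no obstacle here: the corollary is a one-line consequence of the lemma, and the proof is a two-sentence remark about $\Int(x)$ being trivial on an abelian group. If anything, the only thing worth being careful about is making explicit that $\NN_{\tau,n}(x)$ is computed with respect to $\tau$ itself (not with respect to $\Int(x)\tau$), which is exactly how it appears in the statement of Lemma~\ref{lem:change-coset-rep}, so no reconciliation is needed. One could also note for the reader's orientation that this corollary is the abelian-$N$ workhorse used later (together with Lemma~\ref{lem:v-powers-equiv}) to cut down the list of candidate extension types for groups of order $p^4$, since it lets one freely replace $v$ by $\NN_{\tau,n}(x)v$.
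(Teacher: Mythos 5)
Your proof is correct and is exactly the argument the paper intends: the corollary is stated without proof as an immediate specialization of Lemma~\ref{lem:change-coset-rep}, using that $\Int(x)$ is trivial when $N$ is abelian.
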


\begin{lemma}
\label{lem:change-coset}
If $i\in \Z$ is prime to $n$, then the
extension types $(N,n,\tau,v)$ and $(N,n,\tau^{i},v^{i})$
are equivalent.
\end{lemma}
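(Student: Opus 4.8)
The plan is to follow the standard outline sketched just before Lemma~\ref{lem:change-coset-rep}: realize $(N,n,\tau,v)$ by a group $G$, and then read off a second extension type from $G$ using the \emph{same} normal subgroup but a new coset representative, namely a power of the old one.

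Concretely, I would start from a group $G$ that realizes $(N,n,\tau,v)$, so that $N \vartriangleleft G$, the quotient $G/N$ is cyclic of order $n$ generated by $aN$ for some $a \in G\smallsetminus N$, conjugation by $a$ induces $\tau$ on $N$, and $a^n = v$. Then I would take $a' = a^i$ and keep $N' = N$.

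The one place the hypothesis $\gcd(i,n)=1$ is used is to guarantee that $a'N = (aN)^i$ again generates the cyclic group $G/N$ of order $n$, so that $(N',a')$ is an admissible choice in the sense of \S\ref{sec:cyclic-motivation}. Granting that, conjugation by $a'$ on $N$ is the $i$-th power of conjugation by $a$, hence is $\tau^i$; and because $G$ is a group, $(a')^n = a^{in} = (a^n)^i = v^i \in N$. Therefore the extension type that $G$ determines via $(N',a')$ is precisely $(N,n,\tau^i,v^i)$, and since $G$ also realizes $(N,n,\tau,v)$, the two types are equivalent.

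I do not anticipate a genuine obstacle; this argument is short, and the validity of $(N,n,\tau^i,v^i)$ as an extension type comes for free from the construction. The only points requiring a little care are that $i$ need not lie in $\{0,1,\dots,n-1\}$ --- so $a^i$ should be read simply as a power in $G$, whose coset modulo $N$ depends only on $i$ modulo $n$ --- and the trivial case $n=1$, where the two extension types coincide and there is nothing to prove.
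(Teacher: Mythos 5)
Your proof is correct and is essentially the paper's own argument: realize $(N,n,\tau,v)$ by a group $G$ and re-read the extension type off the same normal subgroup $N$ with the new coset representative $a^i$, using $\gcd(i,n)=1$ to ensure $a^iN$ still generates $G/N$. The paper states this more tersely, but the content is identical.
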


\begin{proof}
Let $G$ be a group that realizes $(N,n,\tau,v)$.
This
extension type is determined by choices of $N$ and $a\in G\smallsetminus N$.
One could
just as easily choose $a^{i}\in G\smallsetminus N$
and the resulting extension type would be
$(N,n,\tau^{i},v^{i})$.
\end{proof}

\section{Construction of all groups of order $p^4$}
\label{sec:construction}
\subsection{Dispensing with the abelian case}
In order to construct the groups of order $p^4$,
we could construct, up to equivalence, all extension types
$(N,p,\tau,v)$, where $N$ is a group of order $p^3$.
However,
from the Fundamental Theorem of Finite Abelian Groups
(which follows easily from Theorem 5.3 of \cite{Dummit}),
up to isomorphism the abelian groups are given by the following list:
\[
C_{p^4}, \:\:
C_{p^3}\times C_p, \:\:
C_{p^2} \times C_{p^2}, \:\:
C_{p^2}\times C_p \times C_p, \:\:
C_p \times C_p \times C_p \times C_p.
\]
Therefore, we may concentrate on the nonabelian case.
This allows us to assume that the automorphism $\tau$ is nontrivial.

\subsection{Subgroups $N$}

A corollary of Sylow's Theorem ensures that a group of order $p^4$
has a subgroup of order $p^3$.
Moreover, all subgroups of order $p^3$ are normal.
There are five groups of order $p^3$,
three of which are abelian and two of which are nonabelian.
When constructing extension types, $N$ must
come from this list of five groups.
It is desirable to cut this list down.

\begin{prop}
\label{prop:ab-subgp}
Every group $G$ of order $p^{4}$ has an abelian subgroup of
order at least $p^{3}$.
\end{prop}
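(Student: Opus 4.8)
The plan is to realize the desired subgroup as a maximal normal abelian subgroup of $G$. We may assume $G$ is nonabelian, since otherwise $G$ itself has order $p^4\ge p^3$. Among all abelian normal subgroups of $G$ --- a nonempty collection, since it contains $Z(G)$ --- choose one, $A$, of largest possible order, say $|A|=p^k$. Since $G$ is nonabelian we have $A\subsetneq G$, so $k\le 3$; the goal is to show $k=3$ by ruling out $k=1$ and $k=2$.

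The crucial step is to show that $A$ is \emph{self-centralizing}: $C_G(A)=A$. One inclusion is free, since $A$ is abelian. For the other, suppose $C_G(A)\supsetneq A$. Because $A\vartriangleleft G$, the subgroup $C_G(A)$ is normal in $G$, so $C_G(A)/A$ is a nontrivial normal subgroup of the $p$-group $G/A$, and hence meets $Z(G/A)$ nontrivially (the same orbit-counting argument that shows a $p$-group has nontrivial center shows a nontrivial normal subgroup meets the center). Pick $x\in C_G(A)\smallsetminus A$ with $xA\in Z(G/A)$, so that $[g,x]\in A$ for all $g\in G$. Then $B:=A\langle x\rangle$ is a subgroup properly containing $A$; it is abelian, because $x$ centralizes $A$ and $A$ is abelian; and it is normal in $G$, because $A\vartriangleleft G$ and every $G$-conjugate of $x$ lies in $Ax\subseteq B$ (so every conjugate of an element $ax^i$ of $B$ lies in $Ax^i\subseteq B$). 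This contradicts the maximality of $A$, so $C_G(A)=A$.

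Given this, conjugation defines a homomorphism $G\to\Aut(A)$ with kernel $C_G(A)=A$, so $G/A$ is isomorphic to a $p$-subgroup of $\Aut(A)$ of order $p^{\,4-k}$. If $k=1$ then $\Aut(A)\cong C_{p-1}$ has order prime to $p$, forcing $G=A$, contrary to nonabelianness. If $k=2$ then $A\cong C_{p^2}$ or $A\cong C_p\times C_p$; the former has $|\Aut(A)|=p(p-1)$ and the latter has $\Aut(A)\cong\GL_2(\F_p)$ of order $(p^2-1)(p^2-p)$, so in both cases a Sylow $p$-subgroup of $\Aut(A)$ has order $p$, and the group $G/A$ of order $p^2$ cannot embed. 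Hence $k=3$, and $A$ is an abelian subgroup of order $p^3$.

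I expect the main obstacle to be the self-centralizing step: it is the one place requiring a genuine (if standard) idea rather than bookkeeping, namely the mild strengthening ``a nontrivial normal subgroup of a $p$-group meets the center'' together with the routine but slightly fussy verification that $A\langle x\rangle$ is normal in $G$. Everything after that is the Fundamental-Theorem list of possible $A$ of order $p$ or $p^2$ plus the orders of the relevant automorphism groups, all of which are among the tools the paper allows itself.
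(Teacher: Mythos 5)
Your proof is correct, but it is organized differently from the paper's. The paper works bottom-up: it produces a normal subgroup $H$ of order $p^2$ (either $Z(G)$ itself, or by lifting a central subgroup of order $p$ from $G/Z(G)$), notes that the conjugation map $\Phi\colon G\to\Aut(H)$ satisfies $H\subseteq\ker\Phi$, and uses $p^2\nmid|\Aut(H)|$ to conclude that this containment is strict; any $g\in\ker\Phi\smallsetminus H$ then gives the abelian subgroup $\langle H,g\rangle$ of order at least $p^3$, with no need for that subgroup to be normal. You work top-down from a maximal abelian normal subgroup $A$, prove it is self-centralizing, and then rule out $|A|\leq p^2$ by the same order count on $\Aut(A)$. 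The two arguments share their numerical core: your self-centralizing step is exactly the assertion that an abelian normal subgroup of order at most $p^2$ cannot equal its own centralizer, which is what the paper's strictness argument establishes for $H$. What your route buys is generality --- it is the standard argument bounding maximal abelian normal subgroups of $p$-groups and adapts beyond order $p^4$. What it costs is two extra pieces of machinery the paper avoids: the strengthened fact that a nontrivial normal subgroup of a $p$-group meets the center nontrivially (the paper needs only that $Z(G/Z)$ is nontrivial), and the verification that $A\langle x\rangle$ is normal, which is why you must choose $x$ with $xA$ central in $G/A$. Both of these are handled correctly in your write-up, so the proof stands; it is simply a slightly heavier path to the same destination.
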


\begin{proof}
Let $Z$ denote the center of $G$.
If $\left\vert Z\right\vert \geq p^{3}$,
then we are done.
Since $p$-groups have
nontrivial centers, we may assume that $\left\vert Z\right\vert =p$ or
$\left\vert Z\right\vert =p^{2}$.

We claim that $G$ has a normal
subgroup $H$ of order $p^{2}$.
If $\left\vert Z\right\vert =p^{2}$,
then this is obvious,
so suppose that $\left\vert Z\right\vert =p$.
By the Lattice Isomorphism Theorem
(Theorem 3.20 in \cite{Dummit}),
it is enough to find a normal subgroup of $G/Z$ of order $p$.
Since $G/Z$ is a $p$-group, its center has order at least $p$,
and thus contains a normal (in $G/Z$) subgroup of order $p$,
so the claim is proved.

Now define a
homomorphism $\Phi:G\longrightarrow \Aut(H)$ by
\[
\left[  \Phi(g)\right]  (h)=ghg\inv \, .
\]
If $g,h\in H$, then
$[\Phi(g)] (h)=h$.
This implies $H\subseteq\ker(\Phi)$.

We wish to show that this containment is strict.
Suppose for a contradiction
that $\ker(\Phi)=H$.
Then
$\left\vert \ker(\Phi)\right\vert =\left\vert
H\right\vert =p^{2}$,
which gives $\left\vert G/\ker(\Phi)\right\vert =p^{2}$.
By the First Isomorphism Theorem (Theorem 3.16 in \cite{Dummit}),
$\Phi$ corresponds to a one-to-one map
from $G/\ker(\Phi)$ to $\Aut(H)$.
Therefore, $\left\vert G/\ker
(\Phi)\right\vert $ must divide $\left\vert \Aut(H)\right\vert $.
However,
since $H=\ker(\Phi)$ is isomorphic to either $C_{p^{2}}$ or
$C_p\times C_p$, we have that $\left\vert \Aut(H)\right\vert = p^2-p$
or $(p^2-1)(p^2-p)$,
neither
of which is divisible by $p^{2}$, and so we have a contradiction.

Thus, we may pick an element
$g\in \ker(\Phi)\smallsetminus H$.
Since $g$ must commute with all elements of $H$,
the group generated by $H$ and $g$
is an abelian subgroup of $G$ of order at least $p^3$.
\end{proof}

Therefore, in constructing extension types,
we may assume that $N$ is one of the three abelian groups
of order $p^3$.
However, the following result shows that we
need not consider the case where $N$ is cyclic.

\begin{prop}
\label{prop:no-cyclic}
If a nonabelian group $G$ of order $p^4$ contains a subgroup
isomorphic to $C_{p^3}$,
then $G$ also contains a subgroup isomorphic to $C_{p^2}\times C_{p}$.
\end{prop}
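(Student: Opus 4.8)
The plan is to exploit the machinery of extension types, exactly as the paper has set up. Suppose $G$ is nonabelian of order $p^4$ and contains a cyclic subgroup $N\cong C_{p^3}$. Since $N$ has order $p^3$, it is normal in $G$ (by the remarks preceding Proposition~\ref{prop:ab-subgp}), and $G/N\cong C_p$. Pick $a\in G\smallsetminus N$ with $aN$ generating $G/N$; this yields an extension type $(N,p,\tau,v)$ with $\tau=\Int(a)|_N$ and $v=a^p\in N$. Since $N$ is abelian, $\tau^p=\Int(v)$ is trivial, so $\tau$ has order dividing $p$ in $\Aut(N)\cong\Aut(C_{p^3})$. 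Because $G$ is nonabelian, $\tau$ is nontrivial, hence has order exactly $p$. Write $N=\langle b\rangle$ with $b$ of order $p^3$; then $\tau(b)=b^{1+kp}$ for some $k$ not divisible by $p$ (the order-$p$ automorphisms of $C_{p^3}$ are precisely those of this form), and after replacing the generator we may take $\tau(b)=b^{1+p^2}$.

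The main work is to locate a subgroup isomorphic to $C_{p^2}\times C_p$. The natural candidate is $H=\langle b^p\rangle\times\langle a^\ast\rangle$ for a suitably chosen element $a^\ast$ of order $p$ lying outside $N$; note $\langle b^p\rangle\cong C_{p^2}$ is central-ish in the sense that $\tau$ fixes $b^p$ (indeed $\tau(b^p)=b^{p+p^3}=b^p$), so it commutes with $a$. Thus it suffices to produce an element of order $p$ in the coset $b^j a$ for some $j$, i.e.\ to solve $(b^j a)^p=e$. By Lemma~\ref{lem:power-norm}, $(b^j a)^p=\NN_{\tau,p}(b^j)\,v=\NN_{\tau,p}(b^j)\,a^p$. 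So the key computation is to evaluate $\NN_{\tau,p}$ on powers of $b$ and show that $\NN_{\tau,p}(b^j)v$ can be made trivial — equivalently, that $v$ lies in the image of the "norm" map $x\mapsto\NN_{\tau,p}(x)$ on $N$. Using $\tau(b)=b^{1+p^2}$ one finds $\NN_{\tau,p}(b)=b^{1+(1+p^2)+(1+2p^2)+\cdots+(1+(p-1)p^2)}=b^{\,p+p^2\binom{p}{2}}$, and since $p$ is odd, $p\mid\binom{p}{2}$, so $p^2\binom{p}{2}\equiv 0\pmod{p^3}$; hence $\NN_{\tau,p}(b)=b^p$ and more generally $\NN_{\tau,p}(b^j)=b^{jp}$. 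Therefore the image of the norm map on $N$ is exactly $\langle b^p\rangle$.

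It remains to handle $v=a^p$. Since $\tau=\Int(a)$ fixes $v$ and the fixed points of $\tau$ on $N$ form the subgroup $\{b^m:\tau(b^m)=b^m\}=\langle b^p\rangle$ (as $\tau(b^m)=b^{m(1+p^2)}=b^m$ iff $p\mid m$), we get $v\in\langle b^p\rangle$. Hence $v=\NN_{\tau,p}(b^j)$ for some $j$, and setting $a^\ast=b^{-j}a$ gives $(a^\ast)^p=\NN_{\tau,p}(b^{-j})\,v=v\inv v=e$. Now $a^\ast$ has order $p$, lies outside $N$, and commutes with $b^p$ (since $\tau$ fixes $b^p$); so $H=\langle b^p,a^\ast\rangle$ is abelian, contains $C_{p^2}\times C_p$, and has order exactly $p^3$ because $a^\ast\notin N\supseteq\langle b^p\rangle$ forces the product to be direct. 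Thus $H\cong C_{p^2}\times C_p$, completing the proof.

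The step I expect to be the real obstacle — or at least the one requiring care — is the norm computation $\NN_{\tau,p}(b)=b^p$: it is where oddness of $p$ is used essentially (the term $p^2\binom p2$ vanishes mod $p^3$ only because $p\mid\binom p2$, which fails at $p=2$), and one must also be sure that the parametrization $\tau(b)=b^{1+p^2}$ is available after adjusting the generator, i.e.\ that every order-$p$ automorphism of $C_{p^3}$ is conjugate in $\Aut(C_{p^3})$ to this one. Everything else is bookkeeping with Lemmata~\ref{lem:power-norm} and~\ref{lem:change-coset-rep}.
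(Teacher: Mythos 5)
Your overall strategy is exactly the paper's: normalize $\tau$, compute the norm map $\NN_{\tau,p}$ on $N=\langle b\rangle$, observe that its image is $\langle b^p\rangle$, and use the fact that $v=a^p$ is fixed by $\tau$ to conclude $v\in\im(\NN_{\tau,p})$, whence some $b^{-j}a$ has order $p$ and commutes with $\langle b^p\rangle$. That core computation --- $\NN_{\tau,p}(b^j)=b^{jp}$ because $p\mid\binom{p}{2}$, together with $N^\tau=\langle b^p\rangle$ --- is correct and is precisely where the paper's proof lives.

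However, the step you yourself flag as needing care is broken as written, in two ways. First, the order-$p$ automorphisms of $C_{p^3}$ are \emph{not} the maps $b\mapsto b^{1+kp}$ with $p\nmid k$: since $(1+kp)^p\equiv 1+kp^2 \pmod{p^3}$ (again using $p\mid\binom{p}{2}$), those maps have order $p^2$ when $p\nmid k$; the automorphisms of order exactly $p$ are $b\mapsto b^{1+kp^2}$ with $p\nmid k$. Second, neither ``replacing the generator'' nor conjugation inside $\Aut(C_{p^3})$ can move one order-$p$ automorphism to another: if $\tau(b)=b^s$ then $\tau(b^m)=(b^m)^s$, so rescaling the generator leaves the exponent $s$ unchanged, and $\Aut(C_{p^3})$ is abelian (cyclic of order $p^2(p-1)$), so conjugation in it is trivial and distinct automorphisms are never conjugate. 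The legitimate ways to reduce to $\tau(b)=b^{1+p^2}$ are either to replace $a$ by $a^i$ (Lemma~\ref{lem:change-coset}), which replaces $\tau$ by $\tau^i$ and sweeps out all $p-1$ order-$p$ automorphisms --- this is in effect what the paper does --- or simply to run your norm computation with a general exponent: $\NN_{\tau,p}(b)=b^{p+kp^2\binom{p}{2}}=b^{p}$ for every $k$, so no normalization is needed at all. With either repair the rest of your argument goes through verbatim.
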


\begin{proof}
Let $H=\langle h\rangle$ be a cyclic subgroup of
$G$ of order $p^3$.
Choose $a\in G\smallsetminus H$,
define $v=a^{3}$ and let $\tau\in \Aut(H)$ act via
conjugation by $a$.
Then $G=\langle h,a\rangle$.
It remains 
to show that $G$ contains a subgroup isomorphic to $C_{p^2}\times C_{p}$.
Since $H$ is cyclic and has order $p^3$,
every automorphism of $H$ will be of the form $x\longmapsto x^{m}$,
where $m$ and $p^3$ are
relatively prime.
Thus, $\Aut(H)$ is cyclic of order $\phi(p^3)$, where $\phi$
is the Euler function.
Since $p$ divides $\phi(p^3) = p^2(p-1)$,
$\Aut(H)$ has $\phi(p)=p-1$ elements
of order $p$.
By Lemma~\ref{lem:v-powers-equiv},
we only need to consider one of them.
Since $p^2+1 \not\equiv 1 \bmod p^3$ but
$$
(p^2+1)^p = p^{2p} + \cdots + \tbinom{p}{1} p^2 + 1 \equiv 1 \bmod p^3,
$$
we have that
$x \longmapsto x^{p^2+1}$ is an automorphism of order $p$.
So assume without loss of
generality that $\tau(x)=x^{p^2+1}$.
Let $H'$ be the subgroup of $H$
generated by $h^{p}$.
This is a cyclic subgroup of order $p^2$, and it commutes with
$a$ since $\tau$ fixes $h^{p}$.
To see this, first notice that
\[
\tau(h^{p})=h^{(p^2+1)p}=h^{p}.
\]
Also, $\tau(h^{p})=ah^{p}a\inv$, which gives that $ah^{p}=h^{p}a$.
Thus, it 
suffices to show the existence of an element $x\in G$ of order $p$ such that
$x \notin H'$
and
$xh^{p}  =h^{p}x$,
since then
$\left\langle h^{p},x\right\rangle \cong C_{p^2}\times C_{p}$.
Let $a'=ah^{r}$, where
$r$ is to be determined.
Clearly, $ah^{r}\in G\smallsetminus H$.
So, for $n\in H$, consider
\begin{align*}
a'n(a')\inv &  =(ah^{r})n(h^{-r}a\inv )\\
&  =a\left(  h^{r}nh^{-r}\right)  a\inv\\
&  =a(nh^{r}h^{-r})a\inv\\
&  =ana\inv\\
&  =\tau(n) \, .
\end{align*}
Thus, $a'$ commutes with $H'$, just as $a$ does.
Lastly, it
suffices to show that we can choose $r$ such that $a'$ has order $p$.
From Lemma~\ref{lem:power-norm},
\[
(a')^{p}=\NN(h^{r})v \, .
\]
Thus, we must show that
$\NN(h^{r})=v\inv$
for an appropriate choice of $r$.
Consider
\[
\NN(h^{r})
=\prod_{i=0}^{p-1} \tau^i(h^{r})
=h^{r \sum_i (p^2+1)^i} \, .
\]
Reducing the exponent modulo $p^3$, we obtain
\[
r \sum_{i=0}^{p-1} (p^2+1)^i
=
r \frac{(p^2+1)^p - 1}{(p^2+1)- 1}
\equiv rp \bmod p^3.
\]
That is, $\NN(h^r) = h^{rp}$.
Thus, we must show that $v=h^{-rp}$ for some $r$.
Equivalently, $v\in H'$.
However, $v$ is fixed by $\tau$, and $H'$ is the set of
all fixed points of $\tau$.
\end{proof}

Together, Propositions~\ref{prop:ab-subgp} and~\ref{prop:no-cyclic}
produce a powerful result:
Every
nonabelian group of order $p^4$ contains a subgroup isomorphic to either
$C_{p^2}\times C_{p}$ or $C_{p}\times C_{p}\times C_{p}$.
In particular, it
is sufficient to consider only these two choices of $N$
when constructing extension 
types.
Conveniently, much is known about the automorphism groups of both 
of these groups.

\subsection{Automorphisms $\tau$ of $N$}

For $N=C_{p^2}\times C_{p}$ and $N=C_{p}\times C_{p}\times C_{p}$,
we must find
enough automorphisms $\tau$ of $N$ to construct, up to equivalence, all
extension types of the form $(N,p,\tau,v)$.
The following lemma will be useful.

\begin{lemma}
\label{lem:jordan}
Let $m$ be a positive integer.
Every element of $\GL_m(\F_p)$ of order $p$
is conjugate over $\F_p$ to a matrix in Jordan canonical form
with ones along the diagonal.
\end{lemma}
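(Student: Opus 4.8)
The plan is to reduce at once to the nilpotent case and then invoke the theory of Jordan canonical forms. Suppose $A\in\GL_m(\F_p)$ has order $p$, so that $A^p=I$ while $A\neq I$. First I would record the characteristic-$p$ identity $x^p-1=(x-1)^p$ in $\F_p[x]$: expanding $(x-1)^p=\sum_{i=0}^p\binom{p}{i}x^i(-1)^{p-i}$, the coefficient $\binom{p}{i}$ is divisible by $p$ for $0<i<p$ and so vanishes in $\F_p$, leaving $x^p+(-1)^p=x^p-1$. Since $A$ satisfies $x^p-1=0$, it follows that $(A-I)^p=0$; that is, $B:=A-I$ is nilpotent.

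Next I would apply the theory of Jordan canonical forms to $B$. Its characteristic polynomial is $x^m$, which splits over $\F_p$ (the only eigenvalue being $0$), so there is an invertible $P\in\GL_m(\F_p)$ with $P\inv B P$ block-diagonal, each block a Jordan block having $0$ on the diagonal (ones on the superdiagonal, zeros elsewhere). Adding $I$ to both sides, $P\inv A P=I+P\inv B P$ is block-diagonal with each block a Jordan block having $1$ on the diagonal, i.e.\ a matrix in Jordan canonical form with ones along the diagonal, as required. One could further note that $A\neq I$ forces some block to have size at least $2$ and $A^p=I$ forces every block to have size at most $p$, but neither refinement is needed here.

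The only real content beyond bookkeeping is the identity $x^p-1=(x-1)^p$, which collapses the spectrum of $A$ to the single eigenvalue $1$ and thereby lets the standard machinery finish the job; everything after that is a direct application of Jordan form to the nilpotent matrix $A-I$. If one preferred not to quote the full Jordan canonical form theorem, the nilpotent case alone admits an elementary proof by induction on $m$ (peeling off a maximal Jordan block via a vector realizing the nilpotency index), but since the theory of Jordan forms is on our list of admissible tools, I would simply cite it.
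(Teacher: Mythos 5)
Your proof is correct and follows essentially the same route as the paper: both hinge on the identity $X^p-1=(X-1)^p$ over $\F_p$, which shows the relevant polynomial splits with $1$ as the only eigenvalue, and then invoke the theory of Jordan canonical forms. Your detour through the nilpotent matrix $A-I$ is just a mild repackaging of the same argument.
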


\begin{proof}
Such an element $A$ is a root of the $\F_p$-polynomial $X^p-1$,
which equals $(X-1)^p$, and thus splits over $\F_p$.
Now apply Theorem~12.23 in \cite{Dummit}.
\end{proof}

Note that one could use Sylow theory to
produce a more elementary proof in the cases
where $m= 2$ or $3$, which are the only ones we will need.

\exerdoit

Now we begin our study of the automorphisms
of order $p$ of 
$N=C_{p^2}\times C_{p}$.
Let $x$ be a generator of $C_{p^2}$ and
$y$ be a generator of $C_{p}$.
Then every automorphism of $N$ has the form
\[
\left\{
\begin{aligned}
x & \longmapsto x^{a}y^{b}\\
y & \longmapsto x^{c}y^{d}
\end{aligned}
\right. 
\]
for $a$ and $c$ integers modulo $p^2$, and $b$ and $d$ integers modulo $p$.
This automorphism
can conveniently be represented as the matrix
$\smattwo{a}{c}{b}{d}$.
Note that composition of automorphisms is
compatible with matrix multiplication.
By Lemma~\ref{lem:conj-are-equiv},
we only need to consider automorphisms up to conjugacy in
$\Aut(C_{p^2}\times C_{p})$.
Consider the homomorphism from $\Aut(C_{p^2}\times C_{p})$
into $\GL_2(\F_p)$ that maps the matrix representation 
of an automorphism in $\Aut(C_{p^2}\times C_{p})$
to a matrix in $\GL_2(\F_p)$ by reducing the top
row modulo $p$.
By Lemma~\ref{lem:jordan},
the image in $\GL_2(\F_p)$ of an automorphism of order $p$
in $\Aut(C_{p^2}\times C_{p})$ is conjugate to either
$\smattwo1001$
or
$\smattwo1011$.
Therefore,
we may assume our matrix has the form
$\smattwo{1+ps}{pr}01$
or
$\smattwo{1+ps}{pr}11$,
where $r,s\in\{0,\ldots,p-1\}$.
Moreover, since $\tau$ is not the identity automorphism,
we may assume in a matrix of the former type that $r$ and $s$ are not both zero.

\begin{lemma}
\label{lem:tau-choices2}
If $N=C_{p^2}\times C_{p}$, then we only need to consider
the automorphisms of $N$ represented by the following matrices:
\[
\mattwo1p01,
\mattwo{1+p}001,
\mattwo1011,
\mattwo1p11,
\mattwo1{\varepsilon p}11
\, .
\]
\end{lemma}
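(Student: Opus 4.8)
The plan is to take each of the two normalized families of matrices isolated in the preceding paragraph --- write $A_{r,s}=\smattwo{1+ps}{pr}{0}{1}$ (with $(r,s)\neq(0,0)$) and $B_{r,s}=\smattwo{1+ps}{pr}{1}{1}$, with $r,s$ ranging over $\{0,\dots,p-1\}$ --- and move it, up to equivalence of extension types, onto one of the five listed matrices. The only tools used are conjugacy in $\Aut(N)$ (Lemma~\ref{lem:conj-are-equiv}) and the replacement $\tau\mapsto\tau^{i}$ for $i$ prime to $p$ (Lemma~\ref{lem:change-coset}); all the conjugations needed are by the ``elementary'' automorphisms $\smattwo{1}{pt}{0}{1}$, $\smattwo{1}{0}{u}{1}$, and the diagonal ones $\smattwo{a}{0}{0}{d}$, each of which lies in $\Aut(N)$ since an automorphism matrix is exactly one with $a,d$ prime to $p$ and $(1,2)$-entry a multiple of $p$. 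So the first step is to record, by a one-line matrix computation in each case, how each of these acts on the parameters $(r,s)$; the only nuisance is that the bottom row is read modulo $p$ while the top row is read modulo $p^{2}$ (with the $(1,2)$-entry always a multiple of $p$).

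For the family $A_{r,s}=I+p\smattwo{s}{r}{0}{0}$: conjugation by $\smattwo{1}{0}{u}{1}$ sends $A_{r,s}$ to $A_{r,\,s+ru}$, and conjugation by $\smattwo{1}{0}{0}{d}$ sends it to $A_{rd,\,s}$. Hence, if $r\neq0$, I choose $u$ to kill $s$ and then $d$ to make $r=1$, landing on $\smattwo{1}{p}{0}{1}$; and if $r=0$ (so $s\neq0$) the matrix $\smattwo{1+ps}{0}{0}{1}$ is fixed by every diagonal conjugation, but $\tau^{i}$ has matrix $\smattwo{1+ips}{0}{0}{1}$, so taking $i$ with $is\equiv1\pmod p$ lands on $\smattwo{1+p}{0}{0}{1}$.

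For the family $B_{r,s}$: a direct check shows conjugation by $\smattwo{1}{pt}{0}{1}$ sends $B_{r,s}$ to $B_{r,\,s-t}$, so I may assume $s=0$, i.e.\ work with $B_{r,0}=\smattwo{1}{pr}{1}{1}$. If $r=0$ this is the listed matrix $\smattwo{1}{0}{1}{1}$. The case $r\neq0$ is the heart of the argument. Conjugation alone is useless here: a short computation shows that any conjugation keeping the matrix in its normalized shape --- equivalently, any $\phi\in\Aut(N)$ whose reduction mod $p$ centralizes $\smattwo{1}{0}{1}{1}$ in $\GL_{2}(\F_{p})$ --- leaves $r$ unchanged. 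To get a scaling I pass to $\tau^{i}$. Writing $B_{r,0}=I+M$ with $M=\smattwo{0}{pr}{1}{0}$, one has $M^{2}=\smattwo{pr}{0}{0}{0}$ and $M^{3}=0$ (the bottom row being killed modulo $p$), so $B_{r,0}^{\,i}=\smattwo{1+\binom{i}{2}pr}{ipr}{i}{1}$. Conjugating this by the diagonal automorphism $\smattwo{i}{0}{0}{1}$ restores the bottom-left entry to $1$ and turns the $(1,2)$-entry into $i^{2}pr$, and a final conjugation by $\smattwo{1}{p\binom{i}{2}r}{0}{1}$ clears the residual $\binom{i}{2}$ term; the net effect is that $B_{r,0}$ is equivalent to $B_{\,i^{2}r,\,0}$ for every $i$ prime to $p$. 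Since exactly half of the nonzero residues modulo $p$ are squares, $i$ can be chosen so that $i^{2}r\equiv1$ when $r$ is a square and $i^{2}r\equiv\varepsilon$ when $r$ is a nonsquare, landing on $\smattwo{1}{p}{1}{1}$ or $\smattwo{1}{\varepsilon p}{1}{1}$. This exhausts both families and produces exactly the five listed matrices.

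The main obstacle is the subcase $r\neq0$ of the $B$-family just treated: one must notice both that conjugation cannot touch $r$, and that although $\tau\mapsto\tau^{i}$ by itself wrecks the normalized form (it changes the $(2,1)$-entry from $1$ to $i$), it can be followed by the single diagonal conjugation $\smattwo{i}{0}{0}{1}$ so that the combined effect on the $(1,2)$-entry is precisely multiplication by $i^{2}$ --- which is exactly the amount of room the square/nonsquare dichotomy, and hence the nonsquare $\varepsilon$, needs.
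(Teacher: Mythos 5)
Your proof is correct and follows essentially the same route as the paper: clear the parameter $s$ by conjugation, then combine the power map $\tau\mapsto\tau^{i}$ with a diagonal conjugation so that the net effect on $r$ is multiplication by a unit (in the first family) or by a square $i^{2}$ (in the second family), whence the square/nonsquare dichotomy produces exactly the five listed matrices. The only cosmetic differences are that for $\smattwo{1}{rp}{0}{1}$ the paper rescales $r$ via $\tau\mapsto\tau^{r}$ rather than by a diagonal conjugation, and in the second family the paper clears the $\binom{i}{2}r$ term before, rather than after, conjugating by $\smattwo{i}{0}{0}{1}$.
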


\begin{proof}
Recall that in our matrix computations, the first row
of each matrix is taken modulo $p^2$ and the second
is taken modulo $p$.
We will use the following straightforward calculations:
\begin{align}
\tag{$0$-conj}
\label{eqn:conj-zero}
\mattwo1011
\mattwo\alpha\beta01
\mattwo1011 \inv
&  =
\mattwo{\alpha-\beta}{\beta}01
\\
\label{eqn:power-zero}
\tag{$0$-pow}
{\mattwo{1+p}001}^s
& =
\mattwo{1+sp}001
\\
\label{eqn:conj-one}
\tag{$1$-conj}
\mattwo1{-p}0{\phm1}
\mattwo\alpha\beta11
\mattwo1{-p}0{\phm1} \inv
&  =
\mattwo{\alpha-p}{\beta}11
\\
\label{eqn:power-one}
\tag{$1$-pow}
{\mattwo1{rp}11}^q
& =
\mattwo{1+\binom{q}{2}rp}{qrp}q1
\, .
\end{align}

First, consider a matrix of the form
$\smattwo{1+sp}{rp}01$
for $r,s\in \{0,\ldots,p-1\}$.
If $r\neq0$,
then we may pick $0<t<p$
so that $rt\equiv 1 \bmod p$.
Apply \eqref{eqn:conj-zero} $ts$ times to see that our
matrix is conjugate to
$\smattwo1{rp}01$,
which equals
${\smattwo1{p}01}^r$.
By Lemmata~\ref{lem:conj-are-equiv} and~\ref{lem:change-coset},
we can replace our original matrix by
$\smattwo1p01$.
On the other hand,
if $r=0$, then $s\neq0$, and so
by \eqref{eqn:power-zero} and Lemma~\ref{lem:change-coset},
we may replace our matrix by
$\smattwo{1+p}001$.

Now consider a matrix of the form
$\smattwo{1+sp}{rp}11$.
Applying \eqref{eqn:conj-one} $s$ times,
we see that our matrix is conjugate to
$\smattwo1{rp}11$.
From \eqref{eqn:power-one},
we see that for all $0<q<p$,
the $q$th power of this latter matrix is
$\smattwo{1+\binom{q}{2}rp}{qrp}q1$.
Applying \eqref{eqn:conj-one} $\binom{q}{2}r$ times,
we see that this third
matrix is conjugate to
$\smattwo1{qrp}q1$.
Now note that
$$
\mattwo{q}001{\mattwo1{qrp}q1}{\mattwo{q}001}\inv
= \mattwo1{q^2rp}11.
$$
By Lemmata~\ref{lem:conj-are-equiv} and~\ref{lem:change-coset},
we may thus replace our original matrix by
$\smattwo1{r'p}11$,
where $r'$ is any number that is in the same class as $r$ modulo
squares mod $p$.
Thus, we may assume that $r'$ is $0$, $1$, or $\varepsilon$.
\end{proof}

Next, assume $N=C_{p}\times C_{p}\times C_{p}$.
We will view $N$ as a three-dimensional vector
space over $\F_p$.
By Proposition 4.17(3) in \cite{Dummit},
$\Aut(N)\cong \GL_3(\F_p)$.

\begin{lemma}
\label{lem:tau-choices3}
If $N=C_p \times C_p \times C_p$,
then every automorphism of $N$ of order $p$
is conjugate to one of the following:
$$
\matjordanone,
\matjordantwo \, .
$$
\end{lemma}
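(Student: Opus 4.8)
The plan is to use the identification $\Aut(N)\cong\GL_3(\F_p)$ established just above. Under this identification an automorphism $\tau$ of $N$ of order $p$ corresponds to a matrix $A\in\GL_3(\F_p)$ with $A^p=I$ and $A\neq I$, and (by Lemma~\ref{lem:conj-are-equiv}, together with the fact that the isomorphism $\Aut(N)\cong\GL_3(\F_p)$ carries conjugacy of automorphisms to conjugacy of matrices) it suffices to classify such $A$ up to conjugacy in $\GL_3(\F_p)$. By Lemma~\ref{lem:jordan}, $A$ is conjugate over $\F_p$ to a matrix $J$ in Jordan canonical form all of whose diagonal entries equal $1$.

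Next I would enumerate the possible forms of $J$. A $3$-by-$3$ Jordan form with every eigenvalue equal to $1$ is determined by the sizes of its Jordan blocks, that is, by a partition of $3$: the possibilities $1+1+1$, $2+1$, and $3$ give the identity matrix, $\matjordanone$, and $\matjordantwo$, respectively. Here I should note that the order in which the Jordan blocks are listed along the diagonal is immaterial, since permuting the blocks amounts to conjugating $J$ by a permutation matrix; thus we may always arrange for a block of size at least $2$ to appear first, as in the two displayed matrices. Finally, the identity matrix has order $1$, not $p$, so the partition $1+1+1$ is excluded by hypothesis. Therefore $A$, and hence $\tau$, is conjugate to one of $\matjordanone$ or $\matjordantwo$, as claimed.

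I would also remark briefly that this list is not redundant: the two matrices are not conjugate, since the nilpotent parts $\matjordanone-I$ and $\matjordantwo-I$ have ranks $1$ and $2$, respectively; and each matrix genuinely has order $p$, because its nilpotent part $N$ satisfies $N^3=0$ while $\binom{p}{j}\equiv0\bmod p$ for $0<j<p$ (using $p\geq3$), so $(I+N)^p=I$. There is no real obstacle here; the only points requiring care are that Jordan form is unique only up to reordering the blocks, and that the partition $1+1+1$ must be discarded because it corresponds to the identity. As noted after Lemma~\ref{lem:jordan}, one could alternatively give a purely Sylow-theoretic argument in this $3$-by-$3$ case, avoiding the general theory of Jordan canonical forms.
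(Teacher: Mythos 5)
Your proof is correct and follows the same route as the paper, which simply cites Lemma~\ref{lem:jordan}; you have filled in the routine details (enumerating the partitions of $3$, discarding the identity, and checking non-redundancy), all of which check out.
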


\begin{proof}
This follows from Lemma~\ref{lem:jordan}.
\end{proof}

\subsection{Choices for $v$}
For each of the seven choices of $\tau$ that we have identified
in Lemmata~\ref{lem:tau-choices2} and~\ref{lem:tau-choices3},
we wish to identify a set of choices for $v$
so that all pairs $(\tau,v)$ taken together
will be sufficient
to construct all nonabelian groups of order $p^4$.

Recall that $v$ must belong to the set $N^\tau$
of fixed points of $\tau$ in $N$.
For each $\tau$, finding $N^\tau$ is a straightforward matrix
calculation, equivalent to finding $\ker(\tau-I)$, where
$I$ is the identity map on $N$.
From Corollary~\ref{cor:change-coset-rep}, two choices of $v\in N^\tau$
give equivalent extension types if they differ by an element
of the image of $\NN_{\tau,p}$.
Since $\NN_{\tau,p}$
has the matrix representation $I + \tau + \cdots + \tau^{p-1}$,
it is a straightforward matrix computation to find its image.
Note that since $N$ is abelian, $N^\tau$ and $\im(\NN_{\tau,p})$
are groups, and we are thus interested in choosing $v$ from
a set of coset representatives of $N^\tau / \im(\NN_{\tau,p})$.
From Lemma~\ref{lem:v-powers-equiv}, two such representatives
give equivalent extension types if one is a power of the other (of order
prime to $p$); or, equivalently, if they generate the same subgroup
of $N^\tau$.

For all but one of our choices for $\tau$, this reasoning alone
is enough to show that we only need to consider one or two choices
for $v$.
The results are presented in Table~\ref{table:v-choices}.
The remaining case is the one where
$N = C_p\times C_p\times C_p$ and
$\tau = \matjordanone$,
presented in the table on the line marked (\S).
In this case
the table
reflects our
claim that it is sufficent to take $v$ to be trivial.

To prove this claim, we will show that for any nontrivial $v$,
a group $G$ with extension type $(N,p,\tau,v)$
must contain a group isomorphic to $C_{p^2}\times C_p$,
and thus has already been constructed.
It is enough to
find commuting elements $x,y\in G$ such that $x$ has order $p^2$,
$y$ has order $p$,
and $y\notin \langle x\rangle$.

Let $a\in G$ be an element such that $\tau$ acts via conjugation by $a$.
Since $v$ has order $p$, $a$ has order $p^2$.
The set $N^\tau$ of fixed points of
$\tau$ can be viewed as a $2$-dimensional subspace
of the $\F_p$-vector space $N$.
Meanwhile, $\left\langle v\right\rangle$ is a
1-dimensional subspace of $N^\tau$,
so $N^\tau\smallsetminus \langle v \rangle$ is nonempty.
Pick $x=a$ and $y\in N^\tau\smallsetminus \langle v \rangle$.
This proves the claim, and finishes the justification of
Table~\ref{table:v-choices}.

\begin{table}
\newcommand{\gen}[1]{\Bigl\langle #1 \Bigr\rangle}
\newcommand{\set}[1]{\Bigl\{ #1 \Bigr\}}
\newcommand{\RowWithNote}[7][{}]{#1 & #2 & \gen{#3} & #4 & \gen{#5} &
	\set{#6}
	\vphantom{\set{#6}_0^0} & #7 \\}
\newcommand{\row}[6][{}]{\RowWithNote[#1]{#2}{#3}{#4}{#5}{#6}{}}
\newcommand{\splitrow}[8]{
	&
	\multirow{2}{*}[-1ex]{$#1$} &
	\multirow{2}{*}{
		$\phantom{\Biggr . \Biggl \} } #2 \Biggl\{ \Biggr . $
		}  &
	#3 & \gen{#5} & \set{#7} & (p=3) \\
	& & & #4 & \gen{#6} & \set{#8}\vphantom{\set{#8}^0} & (p>3) \\[2ex]
	}
\newcommand{\matzero}{\matthree000000000}
$$
\begin{array}{c c | c @{} c c l @{} c}
&\multicolumn{1}{c}{\tau} & N^\tau & \NN & \im(\NN)& \text{$v$ choices}\\
\cline{2-6}
\row{\mattwo1p01}{\cvectwo10}{\mattwo p000}{\cvectwo p0}{\cvectwo00,\cvectwo10}
\row{\mattwo{1+p}001}{\cvectwo p0,\cvectwo01}{\mattwo p000}{\cvectwo
		p0}{\cvectwo00,\cvectwo01}
\row{\mattwo1011}{\cvectwo p0,\cvectwo01}{\mattwo p000}{\cvectwo
		p0}{\cvectwo00,\cvectwo01}
\RowWithNote{\mattwo1p11}{\cvectwo p0}{\mattwo z000}{
		\cvectwo p0}{\cvectwo00}{(z=p,2p)}
\splitrow{\mattwo1{\varepsilon p}11}{
	\quad \gen{\cvectwo p0} \quad}{
	\mattwo0000}{\mattwo p000}{\cvectwo00}{\cvectwo p0}{
	\cvectwo00,\cvectwo p0}{\cvectwo00}
\row[(\S)]{\matjordanone}{\cvecthree100,
		\cvecthree001}{\matzero}{\cvecthree000}{\cvecthree000}
		[2ex]
\splitrow{\matjordantwo}{\quad \gen{\cvecthree100}\quad}{
	\matthree001000000}{%
	\matzero}{\cvecthree100}{\cvecthree000}{\cvecthree000}{%
	\cvecthree000,\cvecthree100}
\cline{2-6}
\end{array}
$$
\caption{For each $\tau$, enough choices for $v$}
\label{table:v-choices}
\end{table}

\section{Classification}
\label{sec:classification}

From now on, in writing an extension type
$(N, p ,\tau, v)$ we will feel free to omit $p$,
since it is the same for all types we are considering.
We will also omit $N$, since we can infer it from $\tau$.
That is, $N$ is either $C_{p^2} \times C_p$
or $C_p\times C_p \times C_p$ according as
$\tau$ is a $2$-by-$2$ or $3$-by-$3$ matrix.
Thus, we will refer to a pair $(\tau, v)$ as an extension type.

\begin{mainthm}
Every nonabelian group of order $p^4$ realizes
precisely one of the extension types
$(\tau,v)$
given in Table~\ref{table:distinguish}.
\end{mainthm}

\begin{table}
\newcommand{\newrow}[5][{}]{#1 & #2 & #3 & #4 & \multicolumn{2}{c}{#5} \\}
\newcommand{\newSplitRow}[6][{}]{#1 & #2 & #3 & #4 & #5 & #6 \\}
$$
\begin{array}{c c c |  c c c}
&&\multicolumn{1}{c}{}&&\multicolumn{2}{c}{\text{\# elements of order $\leq p$}} \\
&\tau & \multicolumn{1}{c}{v} & \text{center} & p = 3 & p > 3 \\
\cline{2-6}
\newrow{\mattwo1p01}{\cvectwo00}{C_{p^2}}{p^3}
\newrow{\mattwo1p01}{\cvectwo10}{C_{p^2}}{p^2}
\newrow[(*)]{\mattwo{1+p}001}{\cvectwo00}{C_p\times C_p}{p^3}
\newrow{\mattwo{1+p}001}{\cvectwo01}{C_p\times C_p}{p^2}
\newrow[(*)]{\mattwo1011}{\cvectwo00}{C_p\times C_p}{p^3}
\newrow[(**)]{\mattwo1p11}{\cvectwo00}{C_p}{p^3}
\newSplitRow[(**)]%
	{\mattwo 1{\varepsilon p}11}{\cvectwo00}{C_p}{p^4-p^3+p^2}{
	\phantom{p^4+{}} p^3 \phantom{{}+ p^2}  }
\newSplitRow{\mattwo1{\varepsilon p}11}{\cvectwo p0}{C_p}{p^2}{%
	\text{N/A} }
\newrow{\matjordanone}{\cvecthree000}{C_p\times C_p}{p^4}
\newSplitRow{\matjordantwo}{\cvecthree000}{C_p}{2p^3-p^2}{p^4}
\newSplitRow[(**)]{\matjordantwo}{\cvecthree100}{C_p}{\text{N/A}}{p^3}
\cline{2-6}
\end{array}
$$
\caption{Extension types for all nonabelian groups of order $p^4$}
\label{table:distinguish}
\end{table}

It is understood that if $p=3$, then we ignore
the row in Table~\ref{table:distinguish} containing
``N/A'' in the ``$p=3$'' column, and similarly if $p>3$.
Thus, either way we only consider $10$ rows of the table,
and the theorem is asserting that there are exactly $10$
nonabelian groups of order $p^4$.

\begin{proof}
Considering all choices for $v$ that appear in Table~\ref{table:v-choices},
we obtain eleven pairs $(\tau,v)$,
which are sufficient
for constructing all nonabelian groups of order $p^4$.
All of these pairs appear in
Table~\ref{table:distinguish}
except for
$(\smattwo1011, \scvectwo01)$.
However, in Lemma~\ref{lem:special-equiv}, we will see that
this is equivalent to
$(\smattwo{1+p}001, \scvectwo01)$.
Thus, it only remains to show that the listed extension types
are all pairwise inequivalent.

For each of the groups arising from the pairs $(\tau,v)$
in Table~\ref{table:distinguish},
we compute the isomorphism class of the center,
and we count the elements of order up to $p$.
This alone will distinguish most of these groups from each other.

Suppose $G$ realizes an extension type $(\tau,v)$
from Table~\ref{table:distinguish}.
Then the center of $G$ is precisely the group $N^\tau$ of fixed
points of $\tau$, whose isomorphism
class can be obtained from Table~\ref{table:v-choices}.

Counting the elements of $G$ of order up to $p$ is also straightforward.
First,
note that $G$ is the union of the $p$ cosets $N,Na,\ldots, Na^{p-1}$.
The number of
elements of order up to $p$ in $N$ is easy to compute.
If $N=C_{p^2}\times C_{p}$,
then there are $p^2$ such elements.
If $N=C_{p}\times C_{p}\times C_{p}$,
then there are $p^3$ such elements.
Moreover, for $0<i<p$,
the map $y\mapsto y^i$ induces a bijection between the cosets
$Na$ and $Na^i$ that remains a bijection when restricted to
elements of order $p$.
Thus,
it's enough to count the
elements of order $p$ in the coset $Na$.
For $x\in N$, by Lemma~\ref{lem:power-norm}
we know that
$(xa)^{p}=\NN(x)v$.
If $v\notin \im(\NN)$, then this cannot be the identity,
and so $Na$ has no elements of order $p$.
If $v\in \im(\NN)$,
then the elements of order $p$ in $Na$ are in bijection with
$\ker(\NN)$.
Thus, the number of elements of order up to $p$ in $G$ is
\[
\text{(the number of elements of order $\leq p$ in $N$)}+(p-1)
\begin{cases}
|\ker (\NN)| & \text{if $v\in \im(\NN)$}, \\
0 & \text{otherwise}.
\end{cases}
\]
Note that $|\ker(\NN)| = p^3 / |\im(\NN)|$.
Since $\im(\NN)$ is given 
in Table~\ref{table:v-choices},
we can now count the elements of $G$ of order up to $p$.
The results are presented in
Table~\ref{table:distinguish}.

Inspecting 
Table~\ref{table:distinguish},
we see that the only possible equivalences
are between the two rows labeled ($*$);
and (if $p>3$) among the three rows labeled ($**$).
In Lemma~\ref{lem:special-not-equiv},
we will see that the former equivalence fails.

Consider the three rows labeled ($**$).
Each
determines a group that has precisely $p^3$ elements
of order up to $p$.
In the third group, these must therefore be the elements
of $N$, all of which commute with each other.
However, in the first two groups, it is easy
to find examples of elements of order $p^3$
that do not commute.
Therefore, the third group is distinct from the other two.
In Lemma~\ref{lem:special-not-equiv2},
we will see that the first two groups
are also distinct.

Thus, once we have proved
Lemmata~\ref{lem:special-equiv}, \ref{lem:special-not-equiv},
and~\ref{lem:special-not-equiv2},
the theorem will be established.
\end{proof}

We now take care of unfinished business from the proof.
From now on, let $N = C_{p^2}\times C_{p}$.

\begin{lemma}
\label{lem:special-equiv}
The extension types
$\left( \smattwo{1+p}001, \scvectwo01 \right)$
and
$\left( \smattwo1011,\scvectwo01 \right)$
are equivalent.
\end{lemma}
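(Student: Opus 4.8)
The plan is to follow the template the paper uses for its other equivalence results (e.g.\ Lemma~\ref{lem:change-coset-rep}): realize one of the two extension types by an explicit group $G$, then exhibit inside $G$ a normal subgroup $N'$ and a generator of $G/N'$ so that the extension type this data determines (in the sense of~\S\ref{sec:cyclic-motivation}) is the other one.

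Concretely, I would realize $\left(\smattwo{1+p}001,\scvectwo01\right)$ by a group $G$ via Theorem~\ref{thm:cyclic-extn}, writing $x$ for a generator of the $C_{p^2}$-factor of $N$, $y$ for a generator of the $C_p$-factor, and $a$ for the extending element. Then $a^p=y$, we have $x^{p^2}=y^p=1$ and $axa\inv=x^{1+p}$, and $a$ commutes with $y=a^p$. The crucial observation is that $a$ also commutes with $x^p$, since $ax^pa\inv=(axa\inv)^p=x^{p+p^2}=x^p$. Hence $N':=\langle a\rangle\times\langle x^p\rangle$ is a subgroup of $G$ isomorphic to $C_{p^2}\times C_p$ (the product is direct because $\langle a\rangle$ and $\langle x\rangle$ are complementary in $G$, so meet trivially) of order $p^3$, hence normal of index $p$; and $x\notin N'$, so $xN'$ generates $G/N'$. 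I would take $a':=x$ and compute $\tau':=\Int(x)$ on $N'$.

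This reduces to a short exponent computation. As $\langle x\rangle$ is $\tau$-stable and normalized by the abelian group $N$, it is normal in $G$, so $a\inv xa=x^k$ for some $k$; conjugating $axa\inv=x^{1+p}$ by $a\inv$ gives $x=(a\inv xa)^{1+p}=x^{k(1+p)}$, so $k\equiv(1+p)\inv\equiv1-p\bmod p^2$. Therefore $xax\inv=ax^{-p}$, and consequently $xa\inv x\inv=(ax^{-p})\inv=x^pa\inv=a\inv x^p$. Taking $x':=a\inv$ (order $p^2$) and $y':=x^p$ (order $p$) as generators of $N'$, this says $\tau'(x')=x'y'$ and $\tau'(y')=y'$ — i.e.\ $\tau'$ is represented by $\smattwo1011$ — while $(a')^p=x^p=y'$, so $v'=\scvectwo01$. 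Thus $G$ realizes $\left(\smattwo1011,\scvectwo01\right)$ as well, which is the asserted equivalence.

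There is no real obstacle beyond getting the sign-and-basis bookkeeping to produce $\smattwo1011$ exactly: had I kept $x'=a$ rather than $a\inv$, I would have found $\tau'(x')=x'(y')\inv$, i.e.\ the matrix $\smattwo10{-1}1$ together with $v'=\scvectwo01$; this is conjugate to $\smattwo1011$ via the automorphism $y'\mapsto(y')\inv$ of $N'$, which also sends $v'$ to $\scvectwo0{-1}$, after which Lemmata~\ref{lem:conj-are-equiv} and~\ref{lem:v-powers-equiv} close the gap. Choosing $x'=a\inv$ from the start bypasses that detour.
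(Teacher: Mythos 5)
Your proof is correct and follows essentially the same route as the paper: realize $\left(\smattwo{1+p}001,\scvectwo01\right)$, pass to the subgroup $N'=\langle a, x^p\rangle$ with new extending element $a'=x$, and verify that the induced extension type is $\left(\smattwo1011,\scvectwo01\right)$. The only (cosmetic) difference is that you take $a^{-1}$ rather than $a^{p-1}$ as the generator of the $C_{p^2}$-factor of $N'$, which replaces the paper's exponent computation $(1+p)^{p-1}\equiv 1+(p-1)p\bmod p^2$ with the one-line identity $xa^{-1}x^{-1}=a^{-1}x^p$.
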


\begin{proof}
Assume that $G$ realizes the extension type
$(  N,p, \smattwo{1+p}001, \scvectwo01 )$
and is constructed as in the proof of Theorem~\ref{thm:cyclic-extn}.
Let $x$ and $y$ denote generators of $C_{p^2}$ and $C_p$
(respectively) in $N$.
To construct a subgroup $N'$ that is isomorphic to $N$,
let $x' =a^{p-1}$, $y'=x^p$,
and $N'=\langle x',y'\rangle$.
Define $a'=x$ so that $(a')^p=v'=y'$.
Let $\tau'\in \Aut(N')$ 
be the automorphism that acts via conjugation by $a'$.
Consider
\begin{align*}
\tau'(x')
&=a'x'(a')\inv
=x a^{p-1} x\inv
=x \tau^{p-1}(x\inv) a^{p-1} 
=x (x\inv)^{(1+p)^{(p-1)}} a^{p-1} \\
&\quad
=x (x\inv)^{1+(p-1)p} a^{p-1}
=x x^{p-1} a^{p-1}
=x^p a^{p-1}
=y' x'
=x' y' \, ,
\\
\tau'(y')
&=a'y'(a')\inv=xx^px\inv =x^p=y' \, .
\end{align*}
With respect to the generators $x'$ and $y'$,
$\tau'$ thus has matrix representation
$\smattwo1011$,
and $v'$
has vector representation $\scvectwo{0}{1}$.
Therefore,
$G$ realizes the extension type
$\left(N',p,\smattwo1011, \scvectwo01 \right)$.
\end{proof}

\begin{lemma}
\label{lem:special-not-equiv}
The extension types
$\left( \smattwo{1+p}001,\scvectwo00 \right) $
and
$\left( \smattwo1011,\scvectwo00 \right) $
are inequivalent.
\end{lemma}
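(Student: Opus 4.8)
The plan is to find a characteristic feature that separates the two groups, since (as already noted) their centers and their element-order counts agree. Let $G_1$ and $G_2$ realize $\left(\smattwo{1+p}001,\scvectwo00\right)$ and $\left(\smattwo1011,\scvectwo00\right)$ respectively, built as in Theorem~\ref{thm:cyclic-extn}; in each, let $x,y$ be the standard generators of $C_{p^2}$ and $C_p$ inside $N$ and let $a$ be the extra generator, so that $a^p=e$, the element $y$ is central, and $axa\inv=\tau(x)$. The first step is to read off the commutator subgroups. In $G_1$ we have $[a,x]=x^{1+p}x\inv=x^p$, and in $G_2$ we have $[a,x]=(xy)x\inv=y$; in both cases $[a,x]$ is central, so both groups are nilpotent of class $2$, and since the images of $a,x,y$ pairwise commute modulo $\langle[a,x]\rangle$, we get $[G_1,G_1]=\langle x^p\rangle$ and $[G_2,G_2]=\langle y\rangle$, each of order $p$.

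The heart of the argument is that these two order-$p$ commutator subgroups interact with $p$-th powers in incompatible ways. On one hand, every element of $[G_1,G_1]=\langle x^p\rangle$ is a $p$-th power, because $x^{jp}=(x^j)^p$. On the other hand, I claim no nontrivial element of $[G_2,G_2]=\langle y\rangle$ is a $p$-th power. Indeed, every element of $N$ has $p$-th power $x^{ip}y^{kp}=x^{ip}\in\langle x^p\rangle$, while an element outside $N$ has the form $na^j$ with $n\in N$ and $0<j<p$; since conjugation by $a^j$ acts on $N$ as $\tau^j$ and $(a^j)^p=(a^p)^j=e$, Lemma~\ref{lem:power-norm} gives $(na^j)^p=\NN_{\tau^j,p}(n)(a^j)^p=\NN_{\tau^j,p}(n)$, which equals $\NN_{\tau,p}(n)$ because $N$ is abelian and $j$ permutes the residues modulo $p$. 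By the computation of $\im(\NN_{\tau,p})$ recorded for $\tau=\smattwo1011$ in Table~\ref{table:v-choices}, this lies in $\langle x^p\rangle$. So every $p$-th power in $G_2$ lies in $\langle x^p\rangle$, which meets $\langle y\rangle$ trivially, proving the claim.

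Finally, I would conclude by contradiction: an isomorphism $\phi\colon G_1\to G_2$ must satisfy $\phi([G_1,G_1])=[G_2,G_2]$ and $\phi(g^p)=\phi(g)^p$ for all $g$, so $\phi(x^p)$ would be a nontrivial element of $[G_2,G_2]$ that is a $p$-th power — which the previous paragraph forbids. Hence $G_1$ and $G_2$ are not isomorphic, i.e., the two extension types are inequivalent.

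I expect the only genuinely computational step to be the verification that every $p$-th power in $G_2$ falls inside $\langle x^p\rangle$; the rest is routine bookkeeping in a class-$2$ group. (Equivalently, one could package the whole thing by saying that the characteristic subgroup generated by $p$-th powers equals $[G,G]$ in $G_1$ but intersects $[G,G]$ trivially in $G_2$.)
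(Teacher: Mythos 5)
Your proof is correct, and it is a genuine (if close) variant of the paper's argument: both proofs distinguish the groups by how the commutator subgroup sits relative to the set of $p$-th powers, but they package the comparison differently. The paper sets $H_i=\{g^p : g\in G_i\}$, computes $H_1=H_2=\langle\scvectwo p0\rangle$ via Lemma~\ref{lem:power-norm}, notes this is central and hence normal, and then distinguishes the groups by whether $G_i/H_i$ is abelian --- which it decides by asking whether $\tau$ acts trivially on $N/H_i$, thereby never computing $[G_i,G_i]$ explicitly. You instead compute $[G_1,G_1]=\langle x^p\rangle$ and $[G_2,G_2]=\langle y\rangle$ outright and show that the first consists entirely of $p$-th powers while the second meets the set of $p$-th powers only in the identity. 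Your version proves slightly more (disjointness rather than mere non-containment of $[G,G]$ in the $p$-th powers) at the cost of the explicit commutator computation, and it has the minor advantage of never forming a quotient group. The supporting steps all check out: $[a,x]=\tau(x)x^{-1}$ gives $x^p$ and $y$ respectively, these are central so the groups have class $2$ and the commutator subgroups are as claimed; the $p$-th power of $na^j$ reduces to $\NN_{\tau,p}(n)$ because $a^p=e$, $N$ is abelian, and $j$ is invertible mod $p$; and $\im(\NN_{\tau,p})=\langle x^p\rangle$ for $\tau=\smattwo1011$ agrees with Table~\ref{table:v-choices}.
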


\begin{proof}
Let $G_1$ and $G_2$, respectively,
realize the given extension types.
For $i=1,2$,
define $H_i=\{g^{p}:g\in G_i\}$.
Using Lemma~\ref{lem:power-norm},
one can compute
that $H_1=H_2 = \langle \scvectwo p0 \rangle$.
It follows that each $H_i$ is a subgroup of the center of $G_i$,
and is thus a normal subgroup of $G_i$.
We will be done if we can show that
$G_1/H_1$ is abelian
and $G_2/H_2$ is not.
To do so, we will consider the image of each automorphism $\tau$
in $\Aut(N/H_i) \cong \Aut(C_p\times C_p) = \GL_2(\F_p)$,
obtained by reducing modulo $p$ the first row of the matrix
for $\tau$.
Since the image of
$\tau=\smattwo{1+p}001$
in $\GL_2(\F_p)$
is the identity,
$\tau$ acts trivially on $N/H_1$.
Hence, $G_1/H_1$ is abelian.
However, $G_2/H_2$ is nonabelian since the image of
$\smattwo1011$
in $\GL_2(\F_p)$ is not the identity.
\end{proof}

If $p=3$, then we are done.  Otherwise,
we still need the following result.

\begin{lemma}
\label{lem:special-not-equiv2}
The extension types
$\left(\smattwo1p11, \scvectwo00 \right)$
and
$\left(\smattwo 1{\varepsilon p}11, \scvectwo00\right)$
are inequivalent.
\end{lemma}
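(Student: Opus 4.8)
\emph{Proof plan.} Suppose for contradiction that $\phi$ is an isomorphism from a group $G_1$ realizing $\bigl(\smattwo1p11,\scvectwo00\bigr)$ onto a group $G_2$ realizing $\bigl(\smattwo1{\varepsilon p}11,\scvectwo00\bigr)$. The quantity that distinguishes the two groups is an identity relating an iterated commutator to a $p$th power. Let $a\in G_1$ be the distinguished element (so $a^p=e$, since $v=\scvectwo00$) acting on $N=\langle x,y\rangle$ by $\tau_1=\smattwo1p11$; directly from $\tau_1$ one reads off $[a,x]=y$ and $[a,y]=x^p$, hence $[a,[a,x]]=x^p$. The exponent $\varepsilon$ in $G_2$ forces the analogous double commutator there to acquire a factor that is manifestly a square, and this is what will collide with the hypothesis that $\varepsilon$ is a nonsquare.

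\emph{Preliminary facts about $G_r$ ($r=1,\varepsilon$), and the setup.} First I would record: (i) $a$ has order $p$ and $x$ has order $p^2$; (ii) $x^p$ is central and $G_r/\langle x^p\rangle$ has nilpotency class $\le 2$, both immediate since $\tau_r(x)=xy$ and $\tau_r(y)=x^{rp}y$ with $x^{rp}\in\langle x^p\rangle$; (iii) by Lemma~\ref{lem:power-norm} together with the value of $\NN_{\tau_r,p}$ from Table~\ref{table:v-choices}, which for $p>3$ is the matrix $\smattwo p000$, every element $x^iy^jb^k$ of $G_r$ (writing $b$ for the distinguished element) satisfies $(x^iy^jb^k)^p=x^{ip}$; in particular the elements of order $\le p$ are exactly those with $p\mid i$, so an element has order $p^2$ iff $p\nmid i$. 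Now since $\langle a,x\rangle=G_1$ (because $y=[a,x]$), the images $A=\phi(a)$ and $X=\phi(x)$ generate $G_2$, with $A$ of order $p$ and $X$ of order $p^2$. Write $A=x^{pc}y^db^e$ and $X=x^iy^jb^k$ in terms of the generators of $G_2$. Then $p\nmid i$ by (iii), and $p\nmid e$: otherwise $A=x^{pc}y^d$ lies in the Frattini subgroup $\Phi(G_2)=\langle x^p,y\rangle$, and $\langle A,X\rangle$ would map onto a cyclic, hence proper, subgroup of $G_2/\Phi(G_2)\cong C_p\times C_p$, contradicting $\langle A,X\rangle=G_2$.

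\emph{The contradiction.} Work in $G_2$ modulo the center $\langle x^p\rangle$, where commutators are bilinear and $\bar y$ is central: then $[A,X]\equiv[b^e,x^i]\equiv[b,x]^{ei}=y^{ei}$, so $[A,X]=y^{ei}z_0$ with $z_0\in\langle x^p\rangle$. Since $z_0$ is central and $[b,y]=x^{\varepsilon p}$ is central,
\[
[A,[A,X]]=[A,y^{ei}]=[b^e,y^{ei}]=[b,y]^{e^2i}=x^{\varepsilon p e^2 i}.
\]
On the other hand, $[a,[a,x]]=x^p$ in $G_1$, so $[A,[A,X]]=\phi(x^p)=X^p=x^{ip}$ by fact (iii). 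Comparing exponents of $x$ modulo $p^2$ yields $\varepsilon e^2 i\equiv i\pmod p$, and since $p\nmid i$ and $p\nmid e$ this gives $\varepsilon\equiv (e^{-1})^2\pmod p$, contradicting the choice of $\varepsilon$ as a nonsquare. Hence no such $\phi$ exists.

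\emph{Where the difficulty lies.} The commutator bookkeeping and the evaluation of $\NN_{\tau_r,p}$ are routine; the one delicate point is the argument that $p\nmid e$, i.e.\ that an isomorphism cannot carry $a$ into the Frattini subgroup of $G_2$, which is forced only because $a$ and $x$ together generate $G_1$. I expect the main conceptual obstacle is simply identifying the right invariant: it is essential to use the double commutator $[a,[a,x]]$ rather than a single commutator, because $a$ occurring \emph{twice} is exactly what makes $\varepsilon$ appear as a square, so that the nonsquare hypothesis can be brought to bear.
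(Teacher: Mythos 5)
Your proof is correct, and its engine is the same as the paper's: both arguments pin the difference between the two groups on the iterated commutator $[a,[a,\cdot\,]]$, which under any change of the distinguished generator picks up the \emph{square} of the new exponent of $a$, so that $\varepsilon$ would be forced to be a square mod $p$. The packaging differs in a way worth noting. The paper works with two generating systems inside a single group, and must first show $x'\in N$ and reduce $a'$ to $a^k$ (via an explicit computation with the center and commutator subgroup) before comparing $(\tau'-I)^2$ with $(\tau^k-I)^2$, the common factor $c$ cancelling at the end. You instead transport the relation $[a,[a,x]]=x^p$, valid in $G_1$, through a hypothetical isomorphism and test it against the universal formula $(x^iy^jb^k)^p=x^{ip}$ in $G_2$; the $p$-th power map does the normalizing, and the only structural input you need about $A=\phi(a)$ and $X=\phi(x)$ is that they generate $G_2$, which your Frattini-quotient argument converts into $p\nmid e$ and $p\nmid i$. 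This lets you avoid locating $X$ inside $N$ and $A$ inside a coset $a^kN$, at the cost of invoking class-$2$ commutator bilinearity modulo $\langle x^p\rangle$; all of those supporting computations ($\mathcal{N}=\smattwo p000$ for $p>3$, $[b,x]=y$, $[b,y]=x^{\varepsilon p}$ central) check out. One small remark: your fact (iii) is stated for $p>3$, so the argument as written covers only that case; this is harmless, since for $p=3$ the two types are already separated by the element counts in Table~\ref{table:distinguish}, and the paper likewise only needs this lemma when $p>3$.
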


\begin{proof}
Let $G$ be a group determined by the first extension type.
As in the proof of Lemma~\ref{lem:special-equiv},
we have generators $x$, $y$, and $a$ for $G$,
where $x$ and $y$ generate $N$,
and the conjugation action $\tau$ of $a$ on $N$ is
represented by the matrix
$\smattwo1p11$
with respect to our given generators for $N$.
In order for $G$ to also realize the second extension type,
we must have generators $x'$, $y'$, and $a'$ that satisfy
the same relations as $x$, $y$, and $a$, except that
the conjugation action $\tau'$ of $a'$ on $N':=\langle x',y'\rangle$
should now be represented by the matrix
$\smattwo 1{\varepsilon p}11$ with respect to
our given generators for $N'$.

For a contradiction, suppose that there exist such $x'$, $y'$, and $a'$.

Let $Z$ and $H$ denote the center and commutator subgroup,
respectively, of $G$.
It is straightforward to compute that
$Z = \langle \scvectwo p0 \rangle$,
and $H =\langle \scvectwo p0, \scvectwo01\rangle$.
Since $y$ is an element of order $p$ that lies in $H$
but not in $Z$, the same must be true for $y'$.
Since $x$ commutes with $y$, $x'$ must commute with $y'$.
It is straighforward to compute that for $n\in N$
and $k\not\equiv 0 \bmod p$,
$(na^k) y' (na^k)\inv \neq y'$ for $y'\in H \smallsetminus Z$.
Therefore, $x'\in N$, and so $N'=N$.

For any $n\in N$, $na'$ and $a'$ induce the
same conjugation action on $N$.
Therefore, we may assume that $a' = a^k$
for some $0<k<p$.

For any homomorphism $\varphi \colon N \longrightarrow N$,
consider the homomorphism $\varphi -I$
that takes an element $n$ to $\varphi(n) n\inv$.
To obtain the matrix representation of $\varphi-I$
with respect to some set of generators,
take the matrix representation of $\varphi$ and subtract the identity matrix.
For example, with respect to the generators $x$ and $y$,
we see from \eqref{eqn:power-one}
that $(\tau^k -I )$ is represented by the
matrix
$\smattwo{\binom{k}{2}p}{kp}{k}{0}$.
Thus, the composition $(\tau^k-I)^2$ of this map
with itself is represented by the matrix
$\smattwo{k^2p}000$.
Similarly, with respect to the generators $x'$ and $y'$,
$(\tau' - I)$ is represented by the matrix
$\smattwo{\varepsilon p}000$.

Since $x'$ has order $p^2$, we must have
$x'= x^c y^d$ for some $c$ and $d$, with $c\not\equiv0\bmod p$.
Note that $(x')^p = (x^p)^c$.

For any two group elements $g$ and $h$,
let $[g,h]$ denote $ghg\inv h\inv$.
For example, if $n\in N$, then
$[a,n] = \tau(n) n\inv = (\tau-I)(n)$.

We now compute $\bigl[a',[a',x']\bigr]$ in two ways.
First,
$$
\bigl[a',[a',x']\bigr]
= (\tau' - I)^2 (x') = {x'}^{\varepsilon p} = x^{\varepsilon cp}.
$$
Second,
$$
\bigl[a',[a',x']\bigr]
= \bigl[a^k,[a^k,x']\bigr]
= (\tau^k-I)^2(x^c y^d)
= x^{k^2 c p}.
$$
But the results of these two computations cannot be equal,
since $k^2 \not\equiv \varepsilon \bmod p$,
the latter being a nonsquare.
\end{proof}

From the information we have accumulated, it is possible
to determine which nonabelian groups of order $p^4$
can be decomposed into semidirect products of smaller groups,
and which cannot.

\exerdoit

\section{Comments on the $p=2$ case}
\label{sec:p=2}
The Main Theorem
is only valid for $p$ odd.
Of course, one can find the classification
of groups of order $2^4$ in \cite{Wild}.
However, if the reader wants to adapt the machinery
we have used, here is what is required.

The first of our results that depends on $p$ being odd
is Proposition~\ref{prop:no-cyclic},
which says that it is enough to consider cyclic
extensions of just two abelian groups of order $p^3$.
When $p=2$, then one can prove (or find in \cite{Wild})
an analogous result,
but the two groups in question are now
$C_{p^3}$ and $C_{p^2} \times C_p$.
Our analysis of the nonabelian cyclic extensions of
$C_p \times C_p \times C_p$
is thus unnecessary when $p=2$.
Instead, one needs to study the nonabelian extensions
of $C_{p^3}$, imitating the arguments
of Proposition~\ref{prop:no-cyclic}.

The method of analysis of the extensions of $C_{p^2} \times C_p$
remains valid.
However,
some of the calculations that go into
Table~\ref{table:v-choices} yield different answers,
and Lemma~\ref{lem:tau-choices2}
needs to take into account the fact that $\F_2$
contains no nonsquare element,
so there are no analogues of the extension types involving $\varepsilon$.

In light of the above comments,
it is interesting to
classify the groups of order $16$ using as little effort as possible.

\exerdoit

\bibliographystyle{amsplain}
\bibliography{p4biblio}

\end{document}